\documentclass[11pt]{article}

\usepackage{verbatim,latexsym,amsfonts,amsmath,amssymb,graphicx,fancyhdr,hyperref,asymptote,enumitem }
\usepackage{appendix,latexsym,amsfonts,amsmath,amssymb,graphicx,hyperref,amsthm,soul,verbatim,authblk,enumitem}
\usepackage[framemethod=tikz]{mdframed}

\setlength{\textwidth}{6.25in} \setlength{\oddsidemargin}{0in}
\setlength{\textheight}{20 cm}

\newcommand{\EE}{\mathbb{E}}
\newcommand{\PP}{\mathbb{P}}

\newcommand{\R}{\mathbb{R}}
\newcommand{\C}{\mathbb{C}}

\newcommand{\HH}{\mathbb{H}}
\newcommand{\N}{\mathbb{N}}
\newcommand{\D}{\mathbb{D}}

\newcommand{\TT}{\mathbb{T}}

\newcommand{\pa}{\partial}

\newcommand{\F}{{\cal F}}

\def\eps{\varepsilon}
\def\til{\widetilde}
\def\ha{\widehat}
\def\sem{\setminus}
\def\lin{\overline}

\def\M{{\cal M}}

\DeclareMathOperator{\ccap}{cap} \DeclareMathOperator{\Cont}{Cont}

 \DeclareMathOperator{\diam}{diam}
\DeclareMathOperator{\dist}{dist} 
 
\DeclareMathOperator{\Imm}{Im } \DeclareMathOperator{\Ree}{Re }

\DeclareMathOperator{\mA}{m}

\theoremstyle{plain}
\newtheorem{Theorem}{Theorem}[section]
\newtheorem{Lemma}[Theorem]{Lemma}
\newtheorem{Corollary}[Theorem]{Corollary}
\newtheorem{Proposition}[Theorem]{Proposition}
\theoremstyle{definition}
\newtheorem{Definition}[Theorem]{Definition}
\newtheorem{Remark}[Theorem]{Remark}
\numberwithin{equation}{section}
\newcommand{\BGE}{\begin{equation}}
\newcommand{\BGEN}{\begin{equation*}}
\newcommand{\EDE}{\end{equation}}
\newcommand{\EDEN}{\end{equation*}}

\begin{document}
\title{Optimal H\"older Continuity and Dimension Properties\\ for SLE with Minkowski Content Parametrization}
\author{Dapeng Zhan\thanks{Research partially supported by NSF grant  DMS-1056840 and Simons Foundation grant \#396973.}}
\affil{Michigan State University}
\maketitle

\begin{abstract}
	We make use of the fact that a two-sided whole-plane Schramm-Loewner evolution (SLE$_\kappa$) curve $\gamma$ for $\kappa\in(0,8)$ from $\infty$ to $\infty$ through $0$ may be parametrized by its $d$-dimensional Minkowski content, where $d=1+\frac\kappa 8$, and become a self-similar process of index $\frac 1d$ with stationary increments.
	We prove that such $\gamma$ is locally $\alpha$-H\"older continuous for any $\alpha<\frac 1d$. In the case $\kappa\in(0,4]$, we show that $\gamma$ is not locally $\frac 1d$-H\"older continuous. We also prove that,   for any deterministic closed set $A\subset \R$, the Hausdorff dimension of $\gamma(A)$ almost surely equals $d$ times the Hausdorff dimension of $A$.
\end{abstract}

\section{Introduction}
\subsection{Overview}
The Schramm-Loewner evolution (SLE$_\kappa$), introduced by Oded Schramm in 1999 (\cite{Sch}), is a one-parameter ($\kappa\in(0,\infty)$) family of probability measures on non-self-crossing curves, which has received a lot of attention since then.
It has been shown that, modulo time parametrization, the interface of several discrete lattice models have SLE$_\kappa$ with different parameters $\kappa$ as their scaling limits. The reader may refer to \cite{Law1,RS} for basic properties of SLE.

The regularity property of SLE curves have been studied by a number of authors. Rohde and Schramm proved in \cite{RS} that, for $\kappa\ne 8$, an SLE$_\kappa$ curve exists and is H\"older continuous in its original capacity parametrization. Lind improved the estimates by Rohde and Schramm and derived a better H\"older exponent for SLE in the capacity parametrization (\cite{Lind}), which was later proved (\cite{Optimal}) to be optimal.

People tried to find better H\"older exponents of SLE curves with other parametrizations. It is know that (\cite{RS,Bf}) the Hausdorff dimension of SLE$_\kappa$ is $\min\{1+\frac\kappa 8,2\}=:d$. Thus an SLE$_\kappa$ curve  can not be reparametrized to be H\"older continuous of any order greater than $1/d$. On the other hand, using a variation estimate, Werness proved (\cite{Werness}) that, for $\kappa\le 4$, for any $\alpha<1/d$, an SLE$_\kappa$ curve may be reparametrized to be H\"older continuous of order $\alpha$. The result was later extended to all $\kappa\ne 8$ by Friz and Tran (\cite{FT}).

The natural parametrization of SLE$_\kappa$ is defined for $\kappa\in(0,8)$ (\cite{LS,LZ}), and is expected to be the scaling limit of the natural length of various lattice models, while the convergence has been established for loop-erased random walk (\cite{LERW-NP2,LERW-NP4}). Lawler and Rezaei proved (\cite{LR}) that the natural parametrization of an SLE$_\kappa$ curve   agrees with the $d$-dimensional Minkowski content of the curve. So the natural parametrization is determined by the SLE$_\kappa$ curve.


It was proven in \cite{LR} that the transition function $\Theta$ between the capacity parametrization and the natural parametrization is H\"older continuous, where $\Theta_t$ is defined to be the Minkowski content of the SLE$_\kappa$ curve under capacity parametrization during the time interval $[0,t]$, i.e., if $\gamma$ is an SLE curve with capacity parametrization, then  $\gamma\circ \Theta^{-1}$ is the same SLE curve with natural parametrization. To get any H\"older regularity  of SLE in the natural parametrization based on the H\"older regularity  of SLE in the capacity parametrization, one needs the H\"older continuity of $\Theta^{-1}$, which was not derived in \cite{LR}.

\subsection{Main results}
We fix $\kappa\in(0,8)$ and let $d=1+\frac{\kappa}{8}$.
The main purpose of this paper is to solve a conjecture proposed by Greg Lawler: an SLE$_\kappa$ curve under Minkowski content parametrization is locally H\"older continuous of any order less than $1/d$. We will discard the boundary effect, and focus on the H\"older continuity of any compact subcurve that is bounded away from the boundary.

There are several different versions of SLE. Thanks to the local equivalence between them (cf.\ \cite{LSW1}), we have the freedom to work on any version of SLE$_\kappa$ in any domain.
The main object studied in this paper is a two-sided whole-plane SLE$_\kappa$ curve $\gamma$ in the Riemann sphere $\ha\C$ from $\infty$ to $\infty$ passing through $0$. The nice property of such SLE curve is that the Minkowski content parametrization makes it a self-similar process with stationary increment. 

\begin{Proposition} \cite[Corollary 4.7]{loop}
	A two-sided whole-plane SLE$_\kappa$ curve $\gamma$ from $\infty$ to $\infty$ passing through $0$ may be parametrized by its $d$-dimensional Minkowski content, and become a self-similar process of index $\frac 1d$  with stationary increments. \label{stationary*}
\end{Proposition}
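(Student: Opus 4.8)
\medskip
\noindent\textit{Proof idea.}
The plan is to establish three facts: that the $d$-dimensional Minkowski content gives an honest parametrization of $\gamma$, that the resulting process is self-similar of index $1/d$, and that it has stationary increments. For the first, I would check that, almost surely, $\Cont_d(\gamma([a,b]))\in(0,\infty)$ for all $a<b$, that $t\mapsto\Cont_d(\gamma([0,t]))$ is continuous, and that the total content of $\gamma$ on each side of $0$ is infinite. Finiteness, positivity and continuity are the existence and nondegeneracy of the Minkowski content (equivalently, the natural parametrization) of $\SLE_\kappa$ for $\kappa\in(0,8)$, proved by Lawler and Rezaei \cite{LR}, transferred to the two-sided whole-plane curve via the local absolute continuity between the versions of $\SLE_\kappa$ (cf.\ \cite{LSW1}); the divergence of the content on each side is an easy consequence of the scale invariance used below, since $\gamma$ contains infinitely many disjoint subarcs crossing the annuli $\{2^{n}\le|z|\le 2^{n+1}\}$, the content of such a subarc having the law of $2^{nd}$ times a fixed a.s.\ positive random variable. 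Then the ``clock'' $t\mapsto\Cont_d(\gamma([0,t]))$ (with the obvious sign for $t<0$) is a continuous increasing bijection of $\R$, and reparametrizing $\gamma$ by its inverse gives a continuous process, still called $\gamma$, with $\gamma(0)=0$ and $\Cont_d(\gamma([s,t]))=t-s$.

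For the self-similarity, the law of the two-sided whole-plane $\SLE_\kappa$ from $\infty$ to $\infty$ through $0$ is invariant under $z\mapsto\lambda z$ for every $\lambda>0$: the underlying whole-plane $\SLE_\kappa$ from $\infty$ to $\infty$ has $\infty$ as its only marked point and so is scale invariant, and conditioning to pass through $0$ is scale invariant too. Since $\Cont_d(\lambda A)=\lambda^{d}\Cont_d(A)$, reparametrizing $\lambda\gamma$ by its own content produces the process $u\mapsto\lambda\,\gamma(\lambda^{-d}u)$, which by scale invariance agrees in law with $\gamma$. Writing $\mu=\lambda^{d}$ and replacing $u$ by $\mu t$ yields $(\gamma(\mu t))_{t\in\R}\overset{\mathrm d}{=}(\mu^{1/d}\gamma(t))_{t\in\R}$, which is self-similarity of index $1/d$.

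The stationarity of increments --- that for each fixed $s$ the process $t\mapsto\gamma(t+s)-\gamma(s)$ has the law of $\gamma$ --- is the substantive part, and I would prove it by a re-rooting argument; by the reversibility of the curve it suffices to take $s>0$, so that $\gamma(s)$ lies on the arc from $0$ to $\infty$. First record a two-sided conformal Markov property: conditionally on $\gamma|_{(-\infty,s]}$, the future arc $\gamma|_{[s,\infty)}$ is, up to the content reparametrization, a chordal $\SLE_\kappa(\rho)$-type curve from $\gamma(s)$ to $\infty$ in the complement of $\gamma((-\infty,s])$, for the value of $\rho$ and the force point(s) built into the definition of the two-sided curve (the weighting that conditions $\gamma$ to pass through $0$ being governed by the exponent $2-d$ of the interior one-point, or Green's, function). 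The crux is then the consistency identity: the past arc $\gamma|_{(-\infty,s]}$, viewed from its tip $\gamma(s)$, concatenated with this conditional future, reassembles in law into a two-sided whole-plane $\SLE_\kappa$ from $\infty$ to $\infty$ through $\gamma(s)$ --- that is, the marked interior point may be slid along the curve. This should follow from a Girsanov / Radon--Nikodym computation showing that the relevant $\SLE_\kappa(\rho)$ density is exactly the one making the marked point exchangeable with any other point of the curve. Since whole-plane $\SLE_\kappa$ from $\infty$ to $\infty$ is translation invariant (translations fix its only marked point $\infty$), ``two-sided whole-plane $\SLE_\kappa$ through $\gamma(s)$'' equals ``through $0$'' translated by $\gamma(s)$, and subtracting $\gamma(s)$ while keeping the content parametrization --- which after re-rooting is the content clock based at $\gamma(s)$ --- returns a copy of $\gamma$. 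An equivalent and, for a systematic treatment, cleaner packaging is to realize $\gamma$ as the Palm version of the $\sigma$-finite, conformally invariant whole-plane $\SLE_\kappa$ loop measure, biased by Minkowski content and rooted at a content-typical point; translation and scale invariance of that measure together with the shift-covariance of the content measure along each loop then deliver both self-similarity and stationary increments by a mass-transport argument. The step I expect to be the real obstacle is exactly this consistency / re-rooting identity: one must pin down the correct $\rho$ and verify that the local martingale governing ``pass through the marked point'' does not single out $0$ among the points of the curve. In the loop-measure formulation the same difficulty reappears as the need to run the Palm and mass-transport calculus with an infinite-mass measure --- the total Minkowski content of the bi-infinite curve being infinite --- and to pass to the inverted picture, where the curves become bounded loops through $0$, in order to make sense of ``a content-typical point'' and of the mark at $\infty$.
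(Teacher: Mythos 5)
Note first that the paper at hand does not prove this proposition --- it is cited verbatim as \cite[Corollary 4.7]{loop} --- so there is no in-paper proof to compare against; the relevant comparison is with the argument in \cite{loop}.

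Your sketch is correct in outline and, in its second, ``cleaner packaging,'' lands on the route actually taken in the cited reference. There the two-sided whole-plane SLE$_\kappa$ through a marked point is constructed from the ($\sigma$-finite, conformally covariant) whole-plane SLE$_\kappa$ loop measure, biased by $d$-dimensional Minkowski content and rooted at a content-typical point; self-similarity and stationarity of increments are then read off from the scale/translation invariance of the loop measure together with the conformal covariance of the content measure (Proposition~\ref{conformal-content}) via a mass-transport / re-rooting identity. The algebraic heart of that re-rooting is exactly the curve-point decomposition $\mu^\#_{D;a\to b}\otimes\M_{\gamma;D}=\nu^\#_{D;a\to z\to b}\overleftarrow\otimes(G_{D;a,b}\cdot\mA^2)$ recalled here as Proposition~\ref{decomposition-Thm}: it is this identity that certifies that the marked interior point is exchangeable with any other content-typical point of the curve, which is the consistency statement you (rightly) flag as the real obstacle. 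Your first, more hands-on Girsanov re-rooting is morally the same computation without the loop-measure bookkeeping. Your self-similarity derivation is correct as written: scaling fixes $\infty$ and $0$, $\Cont_d(\lambda A)=\lambda^d\Cont_d(A)$, and reparametrizing $\lambda\gamma$ by its own content gives $u\mapsto\lambda\gamma(\lambda^{-d}u)$, yielding $(\gamma(\mu t))\overset{d}{=}(\mu^{1/d}\gamma(t))$. The preliminary facts you invoke (existence, positivity, finiteness, continuity of Minkowski content for SLE$_\kappa$, $\kappa<8$, from \cite{LR,LZ}, transferred by local equivalence; divergence of total content via the annulus-crossing scaling argument) are also the right ingredients. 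In short: the proposal is correct and, in its loop-measure form, essentially matches the cited proof; the one thing to flag is that it is a sketch whose central step --- making the Palm/re-rooting calculus rigorous for the infinite-mass loop measure --- is precisely where the substance of \cite{loop} lies, and you have identified that honestly rather than filled it in.
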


We will refer the $\gamma$ in Proposition \ref{stationary*} as an sssi SLE$_\kappa$ curve. Here we say that a curve $\gamma$ is parametrized by $d$-dimensional Minkowski content, if for any $a<b$ in the definition domain of $\gamma$, the $d$-dimensional Minkowski content of $\gamma([a,b])$ is $b-a$. A random curve $\gamma$ is called a self-similar process of index $H$  with stationary increments if it is defined on $\R$ with $\gamma(0)=0$ such that (i) for any $a>0$, $(\gamma(at))$ has the same law as $(a^H\gamma(t))$; and (ii) for any $a\in\R$, $(\gamma(a+t)-\gamma(a))$ has the same law as $(\gamma(t))$.

\begin{Theorem}
	An sssi SLE$_\kappa$ curve is a.s.\ locally H\"older continuous of any order less than $1/d$. \label{Theorem-Holder*}
\end{Theorem}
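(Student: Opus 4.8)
The plan is to establish a quantitative moment bound on $|\gamma(s)-\gamma(t)|$ and then invoke a Kolmogorov–Chentsov type continuity criterion. Concretely, I would first show that for every $p>0$ there is a constant $C_p<\infty$ with
\begin{equation}
\EE\big[|\gamma(t)-\gamma(0)|^{-p}\big] \le C_p\, t^{-p/d}\qquad\text{and}\qquad \EE\big[|\gamma(t)-\gamma(0)|^{p}\big] \le C_p\, t^{p/d},
\end{equation}
for $t>0$; by self-similarity (Proposition \ref{stationary*}) it suffices to prove these at $t=1$, i.e.\ to show $|\gamma(1)|$ has moments of all positive \emph{and} negative orders. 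The negative-moment bound is the substantive one: it says the curve cannot travel too short a Euclidean distance while accumulating one unit of Minkowski content, and it is exactly the estimate that converts ``small Minkowski content'' into ``small diameter'' with a good tail. I expect this to follow from the one-point Green's function / Minkowski-content estimates for SLE already developed by Lawler–Rezaei and in the \cite{loop} framework: the event $\{|\gamma(1)|<\eps\}$ forces the curve to spend a full unit of content inside a ball of radius $\eps$, whose expected content is of order $\eps^d$, so a first-moment (Markov) argument together with quasi-multiplicativity of the Green's function upgrades to the stretched-exponential-type tail needed for all negative moments.

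Granting the moment bounds, the second step is standard. By stationarity of increments, for any $s<t$,
\begin{equation}
\EE\big[|\gamma(t)-\gamma(s)|^{p}\big] \le C_p\,|t-s|^{p/d}.
\end{equation}
Fix a compact interval $[-T,T]$ and any $\alpha<1/d$. Choosing $p$ large enough that $p/d > 1 + \alpha p$, the Kolmogorov–Chentsov theorem yields that $\gamma|_{[-T,T]}$ is a.s.\ $\alpha$-Hölder continuous. Taking a countable union over $T\in\N$ and a sequence $\alpha_n\uparrow 1/d$ gives the claim: $\gamma$ is a.s.\ locally $\alpha$-Hölder for every $\alpha<1/d$.

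The one genuine subtlety, and the place I would be most careful, is that $\gamma$ here is the \emph{two-sided whole-plane} curve through $0$, so ``bounded away from the boundary'' really means bounded away from $\infty$; one must check that the moment estimates above, which are cleanest near the marked point $0$, transfer to increments $\gamma(a+t)-\gamma(a)$ for arbitrary fixed $a$. This is precisely what stationarity of increments buys us, so no extra work is needed there — but one should confirm that the Lawler–Rezaei-type bounds are stated (or can be restated via the reversibility/stationarity in \cite{loop}) uniformly over the relevant scales rather than only as $t\to 0^+$. The main obstacle is thus entirely in Step 1: producing the negative moments $\EE[|\gamma(1)|^{-p}]<\infty$ for all $p>0$, i.e.\ ruling out, with sufficiently strong tail decay, that the sssi curve makes a long ``detour'' accumulating content without net displacement. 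Everything after that is the routine Kolmogorov argument.
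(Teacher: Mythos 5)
Your high-level structure (self-similarity and stationary increments reduce everything to moments of $|\gamma(1)|$, then a Kolmogorov--Chentsov type continuity criterion) matches the paper's, which uses the Garsia--Rodemich--Rumsey inequality instead of Kolmogorov--Chentsov — an inessential difference. But you have mis-identified which moment is the crux. You correctly write $\EE[|\gamma(t)-\gamma(s)|^p]\le C_p|t-s|^{p/d}$, i.e.\ the positive moment $\EE[|\gamma(1)|^p]<\infty$, as the input to the Kolmogorov step — and that is the only thing the H\"older argument uses. Yet you then declare the negative moment $\EE[|\gamma(1)|^{-p}]$ ``the substantive one'' and say the main obstacle is to prove $\EE[|\gamma(1)|^{-p}]<\infty$ for all $p>0$. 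This is wrong on two counts. The negative moment feeds the Hausdorff-dimension lower bound (Theorem \ref{Theorem-Mckean*}), not the H\"older estimate; H\"older continuity is an upper tail bound on displacement given small content, hence a statement about $|\gamma(1)|$ being large. And your claimed negative-moment bound is false for $p\ge d$: the density of $\gamma(1)$ near $0$ is governed by the one-point Green's function $\asymp|z|^{d-2}$, so $\EE[|\gamma(1)|^{-p}]$ diverges once $p\ge d$ — which is exactly why the paper's Lemma \ref{integrable*} restricts to exponents $c\in(-d,\infty)$.

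The genuine difficulty for this theorem is the positive moment bound: showing the curve cannot wander far while accruing little Minkowski content. Your Green's-function/first-moment sketch is roughly the right tool for the \emph{negative} moment (and is close to what the paper does for $c<0$, via the decomposition in Proposition \ref{decomposition-Thm} together with two-point Green's function bounds), but it does not bear on the positive moment, and you offer no mechanism for that. The paper obtains $\EE[|\gamma(1)|^p]<\infty$ for all $p>0$ by an entirely different route: a scale-uniform annulus-crossing estimate for radial SLE$_\kappa(2)$ (Lemma \ref{plrR2*}), which, via the conformal Markov property, says that each crossing of a fixed annulus at capacity scale $s$ accrues at least $l\,e^{ds}$ of Minkowski content with conditional probability at least $1-p$, uniformly over the past; iterating over $n$ scales yields $\PP[\diam(\gamma^R_-([0,1]))>4R^n/l^{1/d}]<p^n$, hence a stretched-geometric tail and all positive moments. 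That crossing estimate in turn rests on a transition-density bound for the driving diffusion (Proposition \ref{diffusion-estimate}). This is the missing idea in your proposal: you have nothing that controls the upper tail of $|\gamma(1)|$, which is precisely what your own Kolmogorov step requires.
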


The following theorem resembles Mckean's dimension theorem for Brownian motion (\cite{McK55}). We use $\dim_H$ to denote the Hausdorff dimension. It is closely related to the work in \cite{GHM2,GHM1}, which proves analogous results when the Liouville quantum gravity variant of the Minkowski content is used.

\begin{Theorem}
	For an sssi SLE$_\kappa$ curve $\gamma$ and any deterministic closed set $A\subset \R$, almost surely \BGE \dim_H(\gamma(A))=d\cdot \dim_H(A).\label{Mckean}\EDE
\label{Theorem-Mckean*}
\end{Theorem}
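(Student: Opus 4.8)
**Proof proposal for Theorem \ref{Theorem-Mckean*}.**

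The plan is to establish the two inequalities $\dim_H(\gamma(A)) \le d \cdot \dim_H(A)$ and $\dim_H(\gamma(A)) \ge d \cdot \dim_H(A)$ separately, the first being a relatively soft consequence of H\"older continuity and the second requiring a genuine second-moment/energy estimate. For the upper bound, fix $\alpha < 1/d$ arbitrarily close to $1/d$. By Theorem \ref{Theorem-Holder*}, $\gamma$ is a.s.\ locally $\alpha$-H\"older continuous, so on any compact interval $\gamma$ maps a set of Hausdorff dimension $s$ to a set of Hausdorff dimension at most $s/\alpha$. Applying this to $A \cap [-n,n]$ for each $n$, taking the countable union, and then letting $\alpha \uparrow 1/d$ gives $\dim_H(\gamma(A)) \le d \cdot \dim_H(A)$ almost surely. (One must be slightly careful that the null set on which H\"older continuity fails does not depend on $\alpha$; this is handled by taking a sequence $\alpha_k \uparrow 1/d$.)

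For the lower bound I would use the energy method (Frostman/potential-theoretic criterion for Hausdorff dimension). Let $s < \dim_H(A)$; then there is a compactly supported probability measure $\mu$ on $A$ with finite $s$-energy, $\iint |x-y|^{-s}\, d\mu(x)\, d\mu(y) < \infty$. Push $\mu$ forward under $\gamma$ to get a random measure $\nu = \gamma_*\mu$ supported on $\gamma(A)$, and consider its $(ds)$-energy
\BGE
I_{ds}(\nu) = \iint \frac{d\mu(x)\, d\mu(y)}{|\gamma(x)-\gamma(y)|^{ds}}.
\EDE
It suffices to show $\EE[I_{ds}(\nu)] < \infty$, since then $I_{ds}(\nu) < \infty$ a.s.\ and Frostman's lemma gives $\dim_H(\gamma(A)) \ge ds$ a.s.; letting $s \uparrow \dim_H(A)$ through a countable sequence finishes the proof. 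By Fubini, $\EE[I_{ds}(\nu)] = \iint \EE[|\gamma(x)-\gamma(y)|^{-ds}]\, d\mu(x)\, d\mu(y)$, so the crux is a two-point estimate: for $x \ne y$,
\BGE
\EE\big[|\gamma(x)-\gamma(y)|^{-ds}\big] \lesssim |x-y|^{-s}.
\EDE
By the stationary-increments and self-similarity properties in Proposition \ref{stationary*}, $\gamma(x)-\gamma(y)$ has the same law as $|x-y|^{1/d}\gamma(1)$, so this reduces to the single statement that $\EE[|\gamma(1)|^{-ds}] < \infty$ for every $s < \dim_H(A) \le 1$, i.e.\ that $\EE[|\gamma(1)|^{-\beta}] < \infty$ for all $\beta < d$. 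This is a negative-moment bound on $|\gamma(1)|$, the displacement of the curve after one unit of Minkowski-content time; equivalently, it is a statement that the curve does not spend too much Minkowski content in a small ball around $0$, which should follow from known one-point estimates for the Minkowski content of SLE (the Green's function / one-point function estimates used to construct the natural parametrization in \cite{LR,LZ}), possibly combined with the reversibility and the two-sided structure.

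The main obstacle is precisely this negative-moment estimate $\EE[|\gamma(1)|^{-\beta}] < \infty$ for $\beta < d$. Intuitively, $\{|\gamma(1)| < r\}$ means the Minkowski content of $\gamma$ restricted to a time interval of length $1$ is concentrated in $B(0,r)$; by self-similarity this is comparable to asking that the content of the curve inside a unit ball, run for content-time of order $r^{-d}$, is at least $1$ — a large-deviation-type lower-content event whose probability should decay like a power of $r$ with exponent growing in $r^{-1}$, or at worst polynomially with a good exponent. I would try to extract the needed bound from the existing moment bounds on the Minkowski content occupation measure of SLE near a point (the $G(z)\asymp |z|^{d-2}$ type one-point estimate and its analogues), converting a \emph{lower} bound on occupation into an \emph{upper} bound on the probability via Markov/Paley--Zygmund in the right direction; the delicate point is getting the exponent all the way up to $d$ rather than merely some positive power, which is what makes the dimension identity sharp. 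If a clean tail bound $\PP[|\gamma(1)| < r] \lesssim r^{\lambda}$ for all $\lambda$ (or for $\lambda$ arbitrarily large) is available — plausible given the strong regularity of the content measure — then $\EE[|\gamma(1)|^{-\beta}]<\infty$ for all $\beta$, which is more than enough; otherwise one needs to track the exponent carefully. Everything else (Fubini, Frostman, the reduction to one point via sssi, the countable exhaustion in $s$ and $n$) is routine.
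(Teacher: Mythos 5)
Your proposal is correct and matches the paper's argument: the upper bound follows from the H\"older continuity of Theorem \ref{Theorem-Holder*}, and the lower bound from Frostman's energy criterion, reduced via the sssi property to the negative-moment bound $\EE[|\gamma(1)|^{-\beta}]<\infty$ for $\beta<d$, which is exactly Lemma \ref{integrable*} (with $c\in(-d,0]$) that the paper isolates and proves separately via one- and two-point Green's-function estimates. One small correction to your speculation: the tail bound $\PP[|\gamma(1)|<r]\lesssim r^\lambda$ cannot hold for arbitrarily large $\lambda$, since the one-point Green's function $G(z)\asymp |z|^{d-2}$ forces $\PP[|\gamma(1)|<r]\asymp r^d$, so the moment $\EE[|\gamma(1)|^{-d}]$ diverges and the threshold $\beta<d$ is sharp (and exactly what the argument needs, using $s<\dim_H(A)\le 1$).
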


\begin{Theorem}
	If $\kappa\in(0,4]$, an sssi SLE$_\kappa$ curve $\gamma$ is a.s.\ not $1/d$-H\"older continuous   on any open interval. \label{not-Holder}
\end{Theorem}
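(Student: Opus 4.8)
### Proof strategy for Theorem \ref{not-Holder}

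The plan is to exploit self-similarity together with a zero--one law to reduce the statement to a single quantitative estimate, and then to obtain that estimate from known multifractal behaviour of the Minkowski content (equivalently, of the conformal radius / harmonic measure) near a typical point of an SLE$_\kappa$ curve for $\kappa\le 4$, where the curve is simple. Concretely, let $\gamma$ be an sssi SLE$_\kappa$ curve. Fix $t_0\in\R$; by stationarity of increments we may take $t_0=0$, so $\gamma(0)=0$. Failure of $1/d$-H\"older continuity at $0$ means $\limsup_{t\to 0}|\gamma(t)|/|t|^{1/d}=\infty$. I would first show this event has probability one, and then upgrade ``at every fixed $t_0$ a.s.'' to ``a.s.\ at a dense set of $t_0$ in every open interval,'' which already destroys $1/d$-H\"older continuity on every open interval (since a $1/d$-H\"older function on an interval is $1/d$-H\"older at each interior point). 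The density/upgrade step is soft: countably many fixed rational times, each failing a.s., and then the local $\alpha$-H\"older continuity from Theorem \ref{Theorem-Holder*} for $\alpha<1/d$ is not needed here — what is needed is just that non-$1/d$-H\"older behaviour at a dense set of points obstructs $1/d$-H\"older continuity of the whole restriction, which is immediate.

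The heart is therefore: $\limsup_{t\downarrow 0}|\gamma(t)|\,t^{-1/d}=\infty$ a.s. By self-similarity, $|\gamma(t)|t^{-1/d}$ has, for each fixed $t>0$, the same law as $|\gamma(1)|$, a fixed nondegenerate random variable; but that alone does not give a divergent $\limsup$ along $t\downarrow 0$, because the values at different times are strongly correlated. The right tool is a Blumenthal-type zero--one law for the germ $\sigma$-field at time $0$: the event $\{\limsup_{t\downarrow 0}|\gamma(t)|t^{-1/d}=\infty\}$ is a tail event at $0$, hence has probability $0$ or $1$; scaling invariance forces this (a scaling-invariant germ event) to have a $\{0,1\}$ value, and to rule out the value $0$ I would produce, for each $M$, a positive-probability lower bound $\PP[\,|\gamma(t)|\ge M t^{1/d}\text{ for some }t\in(0,\eps)\,]\ge c(M)>0$ uniformly in $\eps$, which by Fatou/continuity upgrades to probability one on the germ field. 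The uniform lower bound itself comes from self-similarity: $\PP[\sup_{t\in(0,\eps)}|\gamma(t)|t^{-1/d}\ge M]=\PP[\sup_{s\in(0,1)}|\gamma(s)|s^{-1/d}\ge M]$ is independent of $\eps$, and is positive for every $M$ because the supremum is a.s.\ infinite — equivalently because the fractal (Minkowski content) scaling exponent of $\gamma$ at $0$ is genuinely $1/d$ only on average, with the pointwise conformal-radius exponent fluctuating. For $\kappa\le 4$ one can make this concrete using the simple-curve structure: conditionally on an initial segment, the future of the curve is an SLE-type curve in the complement, and one-point estimates for the conformal radius of $\gamma(0,t]$ seen from a reference point (Lawler--Rezaei / Lawler--Werness moment bounds on $\Theta_t$) show that $|\gamma(t)|$ exceeds $M t^{1/d}$ with probability bounded below, uniformly as $t\downarrow 0$.

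I expect the main obstacle to be making the zero--one law rigorous: the natural (Minkowski content) parametrization is a measurable functional of the curve, not of a nice Markovian driving function, so one must identify the correct germ $\sigma$-field at $t=0$ and verify that the relevant $\limsup$ event lies in it and is scaling invariant. One clean route is to avoid a general zero--one law and argue directly: for a sequence $\eps_n\downarrow 0$ chosen sparsely enough that the increments $\gamma$ on $[\eps_{n+1},\eps_n]$ are ``almost independent'' (using the domain-Markov property of SLE and the fact that conditioning on $\gamma|_{[0,\eps_{n+1}]}$ leaves the future as a conformally-mapped SLE whose behaviour near $\gamma(\eps_{n+1})$ still has a uniformly positive chance of a big fluctuation at some scale in $[\eps_{n+1},\eps_n]$), apply a conditional Borel--Cantelli / second-moment argument to conclude that infinitely many of the events $\{\sup_{t\in[\eps_{n+1},\eps_n]}|\gamma(t)-\gamma(\eps_{n+1})|\ge M t^{1/d}\}$ occur a.s. Combined with the a.s.\ local H\"older continuity of order $\alpha<1/d$ from Theorem \ref{Theorem-Holder*} (to control $|\gamma(\eps_{n+1})|$ and keep the anchor point from drifting), this yields $\limsup_{t\downarrow 0}|\gamma(t)-\gamma(0)|\,t^{-1/d}=\infty$ and hence the theorem. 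The restriction $\kappa\le 4$ enters precisely because for $\kappa\le 4$ the curve is simple, so ``the complement of an initial segment'' is a single domain and the conformal-radius estimates driving the positive-probability lower bound are available in clean one-sided form; for $\kappa\in(4,8)$ the self-touching of $\gamma$ makes the analogous lower bound unclear, which is why the statement is limited to that range.
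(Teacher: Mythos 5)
Your second (Borel--Cantelli) route has the right skeleton and is essentially the paper's strategy: reduce to $t=0$ by stationarity and self-similarity, then show $\limsup_{t\downarrow 0}|\gamma(t)|t^{-1/d}=\infty$ a.s.\ by a conditional Borel--Cantelli argument at a sparse sequence of scales using the domain Markov property. (The paper implements this with \emph{spatial} rather than temporal scales: it passes to the time-reversed first arm $\gamma^R_-$, a whole-plane SLE$_\kappa(2)$ from $0$ to $\infty$, sets $T_n=\tau_{R^{-n}}$, and considers crossings of a fixed-modulus annulus around radius $R^{-n}$. These events are $\F_{T_{n-1}}$-measurable, so iterating $\PP[E_n^c\mid\F_{T_n}]<1-p$ directly gives a.s.\ occurrence of some $E_n$, without any auxiliary H\"older control of the anchor point.)

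The genuine gap is the uniform positive-probability estimate that drives the iteration, which you assert but do not supply. What is needed (the paper's Lemma \ref{plrR3*}) is a \emph{lower}-tail bound for the Minkowski content of an annulus crossing: for suitable $R>r$, for every small $l>0$ there is $p>0$ such that, uniformly over $D\in{\cal D}$, a radial SLE$_\kappa(2)$ crosses the annulus with content less than $l\,e^{d\ccap(D^c)}$ with probability at least $p$. The tools you invoke -- Lawler--Rezaei/Lawler--Werness moment bounds on $\Theta_t$ -- control the \emph{upper} tail of the content ($\EE[\Theta_t^p]<\infty$), which is exactly the wrong direction and cannot produce a positive-probability lower bound for the content being anomalously small. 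In the paper, the lower-tail estimate comes, after a chain of conformal reductions and a comparison with the stationary density of Proposition \ref{diffusion-estimate}, from the generalized restriction property: for $\kappa\le 4$, $\mu^\#_{U_\delta;-1\to 1}$ is absolutely continuous w.r.t.\ $\mu^\#_{Q;-1\to 1}$ restricted to curves avoiding $Q\setminus U_\delta$, and the first-moment identity $\EE[\Cont(\beta)]=\int_{U_\delta} G_{U_\delta;-1,1}\,d\mA^2\lesssim\delta^{\kappa/8}\to 0$ then forces small content with positive probability. So $\kappa\le 4$ enters not simply because the curve is simple, but because the restriction property and the resulting Green's function integral are available; your proposed conformal-radius estimates do not substitute for this, and without the lower-tail crossing lemma the argument does not close.
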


\begin{Corollary}
  Let $\gamma$ be a chordal SLE$_\kappa$, radial SLE$_\kappa$, or SLE$_\kappa(\rho)$ curve, $\kappa\in(0,8)$, in any simply connected domain $D$. Let $\beta$ be a compact subcurve of $\gamma$, which has positive distance from $\pa D$ and the set of marked  points of $\gamma$ (including the force points and the target). If $\beta$ is parametrized by its $d$-dimensional Minkowski content measure, then it is a.s.\  H\"older continuous of any order less than $1/d$, and a.s.\ not $1/d$-H\"older continuous when $\kappa\le 4$.
\end{Corollary}
\begin{proof}
   This follows from Theorems \ref{Theorem-Holder*} and \ref{not-Holder}, the conformal covariance of Minkowski content (Proposition \ref{conformal-content}) and the local equivalence between different versions of SLE.
\end{proof}

\begin{Remark}
Theorems \ref{Theorem-Holder*}, \ref{Theorem-Mckean*} and \ref{not-Holder} also hold for $\kappa=8$. See Remark \ref{Remark-last}.
We expect that Theorem \ref{not-Holder} also holds for $\kappa\in(4,8)$.
It is not known to the author whether an SLE$_\kappa$ curve possesses some parametrization that makes it exactly locally $1/d$-H\"older continuous.
\end{Remark}

\subsection{Some proofs}	
The proofs of the main theorems follow some standard arguments. We now give the proofs of Theorems \ref{Theorem-Holder*} and \ref{Theorem-Mckean*} assuming that the following finite moment lemma holds.

\begin{Lemma}
	If $\gamma$ is an sssi SLE$_\kappa$ curve, then
	for any $c\in(-d,\infty)$, $\EE[|\gamma(1)|^c]<\infty$. \label{integrable*}
\end{Lemma}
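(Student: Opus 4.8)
**Proof proposal for Lemma \ref{integrable*}.**

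The plan is to control the two tails of $|\gamma(1)|$ separately, using the self-similarity and the stationarity of increments together with the Minkowski-content parametrization. For the lower tail — i.e.\ the statement that $\EE[|\gamma(1)|^c] < \infty$ for $c \in (-d,0)$, equivalently that $\PP[|\gamma(1)| \le r]$ decays fast enough as $r \to 0$ — I would argue as follows. If $|\gamma(1)|$ is small, then the entire curve segment $\gamma([0,1])$ is ``squeezed'': by stationarity of increments $\gamma([-1,0])$ has the same law as $\gamma([0,1])$ reversed, so $|\gamma(-1)|$ is distributed as $|\gamma(1)|$, and if both $|\gamma(1)|$ and $|\gamma(-1)|$ were small the piece $\gamma([-1,1])$ would lie in a small ball while still having Minkowski content $2$. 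One then invokes the conformal-covariance / scaling of Minkowski content: a curve of content $2$ contained in a ball of radius $r$ forces, after rescaling by $1/r$, a curve of content $2 r^{-d}$ in the unit ball, which is a very atypical event for SLE. Quantitatively, I expect to reduce this to a known one-point or up-to-constants estimate on the probability that an SLE$_\kappa$ curve crosses an annulus of large modulus, or equivalently to the Green's function / content estimates already used in \cite{LR,loop}; these give a stretched-exponential bound $\PP[|\gamma(1)| \le r] \le C\exp(-c r^{-a})$ for some $a>0$, which kills any negative power of $|\gamma(1)|$.

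For the upper tail — $\EE[|\gamma(1)|^c] < \infty$ for all $c > 0$ — I would use self-similarity in the reverse direction: $|\gamma(1)|$ large means that a curve of Minkowski content $1$ has large diameter, so after rescaling, a curve of content $r^{-d}$ (very large) reaches distance $1$ from the origin while a macroscopic portion stays near $0$. Equivalently, by the scaling relation $\gamma(at) \stackrel{d}{=} a^{1/d}\gamma(t)$, we have $\PP[|\gamma(1)| \ge R] = \PP[|\gamma(R^{-d})| \ge 1]$, i.e.\ the content accumulated by $\gamma$ between the first and last visits to the unit circle is at most $R^{-d}$, an unlikely ``the curve passes through the unit ball carrying very little content'' event. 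This should again be controlled by the first-moment/second-moment estimates on the Minkowski content of SLE inside a ball (the two-point function bounds of Lawler--Rezaei), yielding polynomial-or-better decay in $R$ — in fact I expect a bound strong enough to give all positive moments. One convenient route is a Borel--Cantelli / chaining argument over dyadic scales: the content of $\gamma$ restricted to the annulus $\{2^{-n-1} \le |z| \le 2^{-n}\}$ has a moment bound uniform in $n$ by self-similarity, and summing these controls the diameter.

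The main obstacle, I expect, is the lower-tail estimate: showing that $|\gamma(1)|$ cannot be too small requires a genuinely two-sided (or ``both endpoints pinned'') argument, since a one-sided whole-plane SLE from $0$ could in principle return close to $0$; it is exactly the two-sided structure through $0$ and the stationarity of increments that forbid $\gamma([-1,1])$ from being trapped in a tiny ball. Making this quantitative means controlling the probability that an SLE curve has Minkowski content of order $1$ inside a ball of radius $r$ with $r$ small, and I anticipate borrowing the relevant annulus-crossing or content lower-bound estimates from \cite{LR} and \cite{loop} rather than reproving them. Assembling the two tails, $\EE[|\gamma(1)|^c] = \EE[|\gamma(1)|^c \mathbf 1_{|\gamma(1)|<1}] + \EE[|\gamma(1)|^c \mathbf 1_{|\gamma(1)|\ge 1}]$ is then finite for every $c \in (-d,\infty)$, the threshold $-d$ appearing naturally as the scaling exponent matching content to the $d$-th power of length.
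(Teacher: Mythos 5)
Your proposal correctly identifies that the two tails of $|\gamma(1)|$ should be treated separately and that the threshold $-d$ reflects the content-scaling exponent, but both halves of the sketch diverge from the paper's actual argument and, more importantly, contain genuine gaps.

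For the negative moment, your ``squeezing'' argument has a logical flaw: $|\gamma(1)|$ and $|\gamma(-1)|$ both being small does not force $\gamma([-1,1])$ into a small ball, since the curve is free to wander away from $0$ in between and return. More seriously, the stretched-exponential bound $\PP[|\gamma(1)|\le r]\le C\exp(-cr^{-a})$ that you anticipate cannot be correct: such a bound would give $\EE[|\gamma(1)|^c]<\infty$ for \emph{all} $c\in\R$, contradicting the sharpness of the $-d$ threshold. The true decay is polynomial of order $r^d$, and the paper obtains it by an entirely different mechanism: writing $\EE[\int_0^\infty f_c(\gamma(t))\,dt]$ via Proposition \ref{decomposition-Thm} as $\int f_c(z)\,\EE[G_{\ha\C_{\gamma_-};0,\infty}(z)]\,\mA^2(dz)$, showing $\EE[G_{\ha\C_{\gamma_-};0,\infty}(z)]=C|z|^{d-2}$ by scaling and rotation invariance once finiteness at one point is established, and proving that finiteness via the one- and two-point Green's function bounds of \cite{LW,LR,LR2,RZ}. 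No annulus-crossing estimate enters the negative-moment half at all.

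For the positive moment, your identity $\PP[|\gamma(1)|\ge R]=\PP[|\gamma(R^{-d})|\ge 1]$ is the right scaling relation, but first- and second-moment bounds on annulus content, even summed dyadically, only give polynomial decay, not the exponential-in-scales decay needed for \emph{all} positive moments. The paper's mechanism is an iterated crossing estimate (Lemma \ref{plrR2*}): with a uniform positive probability $1-p$, conditionally on the past via the CMP for radial $\SLE_\kappa(2)$, the crossing of each $R$-adic annulus carries at least a fixed fraction of the scale-appropriate content. Multiplying these conditional bounds gives $\PP[u(1)>s_n]<p^n$, hence finite $c$-th moment whenever $c<\log(1/p)/\log R$, and then $p\to 0$ yields all positive moments. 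The conditional/Markovian structure is the key ingredient your Borel--Cantelli chaining sketch is missing, and the proof of Lemma \ref{plrR2*} itself (via the stationary density of the $\SLE_\kappa(2)$ driving diffusion and Koebe distortion bounds) is the bulk of the technical work that your proposal defers to ``known estimates.''
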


\begin{proof}[Proof of Theorem \ref{Theorem-Holder*}]
	Let $\alpha<1/d$. Let $I\subset\R$ be a bounded interval.
	We use the  Garsia-Rodemich-Rumsey inequality (\cite{inequality}) in the following form: For any $p>1/\alpha$, there exists a constant $C_{\alpha,p}\in(0,\infty)$ such that for any $s,t\in I$,
	\BGE |\gamma(t)-\gamma(s)|^p\le C_{\alpha,p}|t-s|^{\alpha p-1}\int_I\!\int_I \frac{|\gamma(s)-\gamma(t)|^p}{|s-t|^{\alpha p+1}}\,dsdt.\label{inequality*}\EDE
	Using Proposition \ref{stationary*}, Lemma \ref{integrable*} and Fubini Theorem, we see that
	$$\EE\Big[\int_I\!\int_I \frac{|\gamma(s)-\gamma(t)|^p}{|s-t|^{\alpha p+1}}\,dsdt\Big]
	=\int_I\!\int_I \frac{\EE[|\gamma(s)-\gamma(t)|^p]}{|s-t|^{\alpha p+1}}\,dsdt $$
	$$
	=\int_I\!\int_I \frac{|s-t|^{p/d}\EE[|\gamma(1)|^p]}{|s-t|^{\alpha p+1}}\,dsdt=\EE[|\gamma(1)|^p]\int_I\!\int_I  |s-t|^{(1/d-\alpha)p-1}dsdt<\infty.$$
	Thus, a.s.\ $\int_I\int_I \frac{|\gamma(s)-\gamma(t)|^p}{|s-t|^{\alpha p+1}}dsdt<\infty$. From (\ref{inequality*}) we see that $\gamma$ is a.s.\ H\"older continuous of order $\alpha-1/p$ on $I$. We can now finish the proof by choosing $\alpha$  close to $1/d$ and $p$ arbitrarily big.
\end{proof}

\begin{Remark}
	We actually proved that, for any $\alpha<1/d$, the $\alpha$-H\"older norm of $\gamma$ restricted to a bounded interval has finite $p$-moment for any $p>0$.
\end{Remark}

\begin{proof}[Proof of Theorem \ref{Theorem-Mckean*}]
	Since $\gamma$ is a.s.\ locally $\alpha$-H\"older continuous for any $\alpha<1/d$,  we get the upper bound of (\ref{Mckean}): $\dim_H(\gamma(A))\le d \cdot \dim_H(A)$ (cf.\ \cite[Remark 4.12]{BM}).
	
	For the lower bound, we use a corollary of Frostman's Lemma (cf.\ \cite{Frostman}). 
	Let ${\cal P}(E)$ denote the space of probability measures on a metric space $(E,\rho)$. For $\mu\in{\cal P}(E)$ and $\alpha\ge 0$, the $\alpha$-energy of $\mu$ is
	$$I_\alpha(\mu):=\int_E\!\int_E \frac{\mu(dx)\mu(dy)}{\rho(x,y)^\alpha}.$$
	Frostman's lemma implies that (cf.\ \cite[Theorem 4.36]{BM}) for any closed set $A\subset \R^n$,
	\BGE \dim_H(A)=\sup\{\alpha:\exists\mu\in{\cal P}(A)\mbox { with }I_\alpha(\mu)<\infty\}.\label{Frostman*}\EDE
	
	We now prove the lower bound of (\ref{Mckean}). Let $A\subset\R$ be closed. By replacing $A$ with $A\cap [-n,n]$, we may assume that $A$ is compact. If $\dim_H(A)=0$, the statement of lower bound is trivial. Suppose $\dim_H(A)>0$. Let $\alpha\in(0,\dim_H(A))$. By (\ref{Frostman*}), there is a $\mu\in{\cal P}(A)$ such that $I_\alpha(\mu)<\infty$.
	Now $\gamma(A)$ is a compact subset of $\C$, and $\gamma_*(\mu)\in{\cal P}(\gamma(A))$. We compute that
	$$\EE[I_{d\alpha}(\gamma_*(\mu))]=\int_A\!\int_A \EE[|\gamma(s)-\gamma(t)|^{-d\alpha}] \mu(ds)\mu(dt)=I_\alpha(\mu)\EE[|\gamma(1)|^{-d\alpha}]<\infty,$$
	using Proposition \ref{stationary*}, Lemma \ref{integrable*} and the fact that $-d\alpha>-d$, which follows from $\alpha<\dim_H(A)\le 1$. Thus, we have a.s.\ $I_{d\alpha}(\gamma_*(\mu))<\infty$. From (\ref{Frostman*}), we then get a.s.\ $\dim_H(\gamma(A))\ge d\alpha$. By choosing $\alpha$ arbitrarily close to $\dim_H(A)$, we finish the proof of the lower bound.
\end{proof}

The rest of the paper is devoted to proving Lemma \ref{integrable*} and Theorem \ref{not-Holder}. We review some preliminary facts on two-sided whole-plane SLE and Minkowski content in Section \ref{Preliminary}, and then prove Lemma \ref{integrable*} and Theorem \ref{not-Holder} in Section \ref{proof-key-lem} assuming two lemmas about crossing estimates for radial SLE$_\kappa(2)$ curves. In Section \ref{Section-crossing}, we prove these crossing estimate lemmas. At the end,  we discuss  sssi SLE$_\kappa$ curves for $\kappa\ge 8$ and the Hausdorff measure of SLE$_\kappa$ curves for $\kappa<8$.

\section*{Acknowledgments}
The author thanks Greg Lawler and Nina Holden for inspiring discussions and valuable comments, acknowledges the support from the National Science Foundation (DMS-1056840) and from the Simons Foundation (\#396973), and thanks Columbia University and KIAS, where part of this work was carried out during two conferences held by them. The author also thanks the anonymous referees, whose comments improved the quality of the paper.

\section{Preliminary} \label{Preliminary}

\subsection{Two-sided whole-plane SLE} \label{Section-two-sided}

A two-sided whole-plane SLE$_\kappa$ curve ($\kappa\in(0,8)$) is   composed of two arms: the first arm  is a whole-plane SLE$_\kappa(2)$ curve from a marked point in   $\ha\C$, say $a$, to another marked point in $\ha\C$, say $b$; and given the first arm, the second arm is a chordal SLE$_\kappa$ curve from $b$ to $a$ in one complement domain of the first arm. 


A whole-plane SLE$_\kappa(2)$ curve $\gamma$ grows in $\ha\C$ from one (interior) point, say $a$, to another point, say $b$, with the force point located at the initial point $a$. It is  related with  radial SLE$_\kappa(2)$ processes by the domain Markov property: if $\tau$ is a stopping time for $\gamma$, which happens after $\gamma$ leaves $a$ and before $\gamma$ reaches $b$, then conditionally on the part of $\gamma$ before $\tau$, the rest of the $\gamma$ is a radial SLE$_\kappa(2)$ curve in a complement domain of the explored portion of $\gamma$ at time $\tau$ from $\gamma(\tau)$ to $b$ with the force point located at $a$. Here $a$ lies on the boundary of that domain, and determines a prime end. The above also holds with $2$ replaced by some $\rho\ge \frac \kappa 2-2$.

We now recall the definition of a radial SLE$_\kappa(2)$ curve. Let $\D=\{z\in\C:|z|<1\}$, $\D^*=\ha\C\sem \lin{\D}$, and  $\TT=\pa\D^*=\{|z|=1\}$. 
Let $a,b$ be distinct points on $\TT$. Let $x,y\in\R$ be such that $a=e^{ix}$, $b=e^{iy}$ and $x-y\in(0,2\pi)$. Let $(B_t)$ be a standard Brownian motion. Let $(\lambda_t)$ and $(q_t)$ be the solution of the SDE:
\BGE \left\{ \begin{array}{ll} d\lambda_t=\sqrt{\kappa}dB_t +\cot((\lambda_t-q_t)/2)dt, & \lambda_0=x\\
	dq_t=\cot((q_t-\lambda_t)/2)dt, &q_0=y.
\end{array}\right.\label{SDE-lambda-q}\EDE

Let $g_t$, $t\ge 0$, be the solution of the following radial Loewner equation:
$$\pa_t g_t(z)=g_t(z)\cdot \frac{e^{i\lambda_t}+g_t(z)}{e^{i\lambda_t}-g_t(z)},\quad g_0(z)=z.$$
For each $t\ge 0$, let $K_t$ denote the set of $z\in\D^*$ such that the solution $s\mapsto g_s(z)$ blows up before or at time $t$. Then $g_t$ maps $\D^*\sem K_t$ conformally onto $\D^*$, fixes $\infty$, and satisfies $g_t(z)/z\to e^{-t}$ as $z\to\infty$. It turns out that, for each $t\ge 0$, $g_t^{-1}$ extends continuously to $\lin{\D^*}$, and $\gamma(t):=g_t^{-1}(e^{i\lambda_t})$, $t\ge 0$, is a continuous curve in $\lin{\D^*}$ that starts from $e^{i\lambda_0}=e^{ix}=a\in\TT$, and tends to $\infty$ as $t\to \infty$. Moreover, for each $t\ge 0$, $\D^*\sem K_t$ is the unbounded component of $\D^*\sem \gamma([0,t])$.

Such $\gamma$ is called a radial SLE$_\kappa(2)$ curve in $\D^*$ from $a$ to $\infty$ with a (value $2$) force point at $b$; $\lambda_t$ and $q_t$ are called the driving process and force point process for $\gamma$, respectively; and   $g_t$ are called the radial Loewner maps for $\gamma$.

If $D$ is a simply connected domain with two distinct marked prime ends $a,b$ and one marked interior point $z_0$, there exist $a'\ne b'\in\TT$ and a conformal map $f$ from $(\D^*;a',b';\infty)$ onto $(D;a,b;z_0)$. Then the $f$-image of a radial SLE$_\kappa(2)$ curve in $\D^*$ from $a'$ to $\infty$ with a force point at $b'$ is called a radial SLE$_\kappa(2)$ curve in $D$ from $a$ to $z_0$ with a force point at $b$.

The radial SLE$_\kappa(2)$ curve $\gamma$ in $D$ satisfies the following domain Markov property: if $T$ is a stopping time for $\gamma$ that happens before $z_0$ is reached, then conditional on the part of $\gamma$ before $T$, the part of $\gamma$ after $T$ is a  radial SLE$_\kappa(2)$ curve from $\gamma(T)$ to $z_0$ with a force point at $b$ in the complement domain of $\gamma([0,T])$ in $D$ that contains $z_0$.

The above radial SLE$_\kappa(2)$ curve $\gamma$ also acts  the role of the first arm of
a two-sided radial SLE$_\kappa$ curve in $D$ from $a$ to $b$ through $z_0$, which is composed of two arms. Conditional on the first arm, the second arm is a chordal SLE$_\kappa$ curve from $z_0$ to $b$ in the complement domain of the first arm, whose boundary contains $b$.

Throughout, we  use   $\mu^\#_{D;a\to b}$ to denote the law of a chordal SLE$_\kappa$ curve in $D$ from $a$ to $b$;  use $\nu^\#_{D;a\to z_0; b}$ to denote the law of a  radial SLE$_\kappa(2)$ curve in $D$ from $a$ to $z_0$ with a  force point at $b$; and use $\nu^\#_{D;a\to z_0\to b}$ to denote the law of a two-sided radial SLE$_\kappa$ curve in $D$ from $a$ to $b$ passing through $z_0$, all modulo time parametrization.

The Green's function for the chordal SLE$_\kappa$ curve in $D$ from $a$ to $b$ is the function $G_{D;a,b}(z)$ defined by $$G_{D;a, b}(z_0)=\lim_{r\downarrow 0} r^{d-2} \mu^\#_{D;a\to b} \{\gamma:\dist(\gamma, z_0)<r\},\quad z_0\in D.$$
The limit is known to exist, and satisfies conformal covariance, i.e., if $f$ maps $(D_1;a_1,b_1)$ conformally onto $(D_2;a_2,b_2)$, then
\BGE G_{D_1;a_1, b_1}(z)=|f'(z)|^{2-d} G_{D_2;a_2,b_2}(f(z)),\quad z\in D_1.\label{Green-covariance}\EDE

\subsection{Minkowski content measure} \label{Minkowski-Section}
In this subsection, we review the notion of Minkowski content, Minkowski content measure, and cite some propositions from \cite[Sections 2.3 and 2.4]{loop}.

Throughout, the Minkowski content will always be $d$-dimensional. Since $d=1+\frac{\kappa}8$ is fixed, we will omit the word ``$d$-dimensional''. Let $\mA$ and $\mA^2$ denote the $1$ and $2$-dimensional Lebesgue measures, respectively.
Let $S\subset\C$ be a closed set. The Minkowski content of $S$ is defined to be
$ \Cont(S)=\lim_{r\downarrow 0} r^{d-2}{ \mA^2(\bigcup_{z\in S} B(z;r))}$, where $B(z;r)$ is the open disc with radius $r$ centered at $z$,
provided that the limit exists.

\begin{Definition}
	Let $S,U\subset\C$. Suppose $\M$ is a measure supported by $S\cap U$ such that
	for every compact set $K\subset U$, $\Cont(K\cap S)= \M (K)<\infty$.
	Then we say that $\M$ is the  Minkowski content measure on $S$ in $U$, or $S$ possesses  Minkowski content measure in $U$, and denote the measure by  $\M_{S;U}$. If $U=\C$, we may omit the phrase ``in $U$'', and write the measure as $\M_S$.
\end{Definition}

\begin{Proposition} \cite[Lemma 2.6]{loop}
	Suppose that $S$ possesses  Minkowski content measure $\M_{S;U}$ in an open set $U\subset\C$. Suppose $f$ is a conformal map defined on $U$ such that $f(U)\subset\C$.
	Then the Minkowski content measure of $f(S\cap U)$ in $f(U)$ exists, which is absolutely continuous w.r.t.\  $f_*(\M_{S;U})$, and the Radon-Nikodym derivative is $|f'(f^{-1}(\cdot))|^{d}$. \label{conformal-content}
\end{Proposition}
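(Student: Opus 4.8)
The plan is to reduce the statement to one identity about compact sets and then establish that identity by a localization argument. First I would set $\mu:=|f'|^d\,\M_{S;U}$, which is a locally finite Borel measure on $U$ because $|f'|$ is continuous, positive and locally bounded while $\M_{S;U}$ is locally finite, and put $\nu:=f_*\mu$, a locally finite Borel measure on $f(U)$ that is absolutely continuous with respect to $f_*\M_{S;U}$ with Radon--Nikodym derivative $w\mapsto|f'(f^{-1}(w))|^d$. It then suffices to prove that for every compact $K\subseteq f(U)$ the content $\Cont\!\big(f(S\cap U)\cap K\big)$ exists and equals $\nu(K)=\int_{f^{-1}(K)}|f'|^d\,d\M_{S;U}$: by the definition of the Minkowski content measure this simultaneously shows that $f(S\cap U)$ possesses a Minkowski content measure in $f(U)$ and identifies it with $\nu$. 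Since $f$ maps $U$ bijectively onto $f(U)$ and $K\subseteq f(U)$, one has $f(S\cap U)\cap K=f(S\cap K_0)$ with $K_0:=f^{-1}(K)$ compact in $U$; throughout write $N_r(A):=\bigcup_{z\in A}B(z;r)$.

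Fix such a $K$ and $\eps\in(0,1)$. I would choose $\delta>0$ with $\overline{N_\delta(K_0)}\subseteq U$ compact; on that set $|f'|$ is bounded away from $0$ and $\infty$ and uniformly continuous, so $f$ and $f^{-1}$ are bi-Lipschitz there. Using uniform continuity of $f'$, partition $K_0$ into finitely many Borel pieces $E_1,\dots,E_N$ of diameter $<\delta$, each carrying a reference point $z_j$ with $a_j:=|f'(z_j)|$, arranged so that $|f'(z)|\in\big((1-\eps)a_j,(1+\eps)a_j\big)$ for every $z$ in the $\delta$-neighborhood of $E_j$. I would moreover take the $E_j$ to be the cells of a suitably translated dyadic grid of small mesh, so that each $\overline{E_j}$ is a closed square with $\M_{S;U}(\partial E_j)=0$; this is possible because a locally finite Borel measure on $\C$ charges at most countably many vertical lines and at most countably many horizontal lines, hence for generic translation every grid line --- and so every cell boundary --- is $\M_{S;U}$-null.

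Next, for each $j$, bi-Lipschitz control of $f$ near $E_j$ gives, for all sufficiently small $r>0$,
\[
f\!\Big(N_{r/((1+\eps)a_j)}(S\cap E_j)\Big)\ \subseteq\ N_r\!\big(f(S\cap E_j)\big)\ \subseteq\ f\!\Big(N_{r/((1-\eps)a_j)}(S\cap E_j)\Big).
\]
Applying $\mA^2$, using $\mA^2(f(A))=\int_A|f'(z)|^2\,\mA^2(dz)$ and the pinching of $|f'|^2$ near $E_j$, then multiplying by $r^{d-2}$ and letting $r\downarrow0$ --- here using that $\overline{E_j}\subseteq U$ is compact, so $\rho^{d-2}\mA^2(N_\rho(S\cap E_j))\to\M_{S;U}(E_j)$ (the closure/interior ambiguity of $E_j$ being harmless since $\M_{S;U}(\partial E_j)=0$) --- I obtain $\theta_-(\eps)\,a_j^{\,d}\M_{S;U}(E_j)\le\liminf_r r^{d-2}\mA^2(N_r(f(S\cap E_j)))\le\limsup_r r^{d-2}\mA^2(N_r(f(S\cap E_j)))\le\theta_+(\eps)\,a_j^{\,d}\M_{S;U}(E_j)$ for constants $\theta_\pm(\eps)\to1$. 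Then I would patch: from $N_r(f(S\cap K_0))=\bigcup_j N_r(f(S\cap E_j))$ and subadditivity one gets $\limsup_r r^{d-2}\mA^2(N_r(f(S\cap K_0)))\le\theta_+(\eps)\sum_j a_j^{\,d}\M_{S;U}(E_j)$; for the matching lower bound, replace each $E_j$ by the compact core $F_j:=\{z\in E_j:\dist(z,\partial E_j)\ge\beta\}$, so the sets $f(S\cap F_j)$ are pairwise separated by a positive distance, whence $N_r(f(S\cap K_0))\supseteq\bigsqcup_j N_r(f(S\cap F_j))$ for small $r$ and $\liminf_r r^{d-2}\mA^2(N_r(f(S\cap K_0)))\ge\theta_-(\eps)\sum_j a_j^{\,d}\M_{S;U}(F_j)$, with $\M_{S;U}(F_j)\uparrow\M_{S;U}(E_j)$ as $\beta\downarrow0$, again by $\M_{S;U}(\partial E_j)=0$. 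Since $a_j^{\,d}\M_{S;U}(E_j)$ and $\int_{E_j}|f'|^d\,d\M_{S;U}$ differ by a factor in $\big((1+\eps)^{-d},(1-\eps)^{-d}\big)$, the sum $\sum_j a_j^{\,d}\M_{S;U}(E_j)$ approximates $\nu(K)$, so sending $\beta\downarrow0$ and then $\eps\downarrow0$ pinches $\liminf$ and $\limsup$ together at $\nu(K)$, which finishes the proof.

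The step I expect to be the main obstacle is the patching: because Minkowski content is not a priori countably additive, one cannot simply split $\M_{S;U}$ additively, so one must separately control the overcount coming from overlapping $r$-neighborhoods of adjacent cells and the loss $\M_{S;U}(E_j\setminus F_j)$ incurred by shrinking to the separated cores $F_j$, and show both vanish in the limit --- which is exactly what forces the careful choice of a grid partition with $\M_{S;U}$-null cell boundaries. The geometric comparison of $r$-neighborhoods under the quasilinear map $f$ (the displayed inclusions above) is routine but has to be set up with enough uniformity in $j$, and one should check that the mismatch between $S\cap E_j$ and $S\cap\overline{E_j}$ there is absorbed by $\Cont(S\cap\partial E_j)=\M_{S;U}(\partial E_j)=0$.
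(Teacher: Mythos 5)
Your proof is correct and is the natural localization argument for this lemma: partition the pullback $K_0=f^{-1}(K)$ into small grid cells with $\M_{S;U}$-null boundaries on which $f$ is nearly linear, compare $r$-neighborhoods of $f(S\cap E_j)$ with scaled $r$-neighborhoods of $S\cap E_j$ cell by cell via Koebe/bi-Lipschitz control, and patch by subadditivity (upper bound) and pairwise-separated compact cores (lower bound). Two points to make explicit in a careful write-up: the neighborhood inclusions need the full complex derivative $f'$ --- its argument as well as its modulus --- to be nearly constant near each cell, which is indeed what the uniform continuity of $f'$ you invoke provides but is more than the displayed pinching of $|f'|$ alone; and the shrunk cores should be taken inside $K_0$, e.g.\ $F_j=\{z\in E_j\cap K_0:\dist(z,\pa Q_j)\ge\beta\}$ with $Q_j$ the $j$th closed grid square, so that $f(S\cap F_j)\subseteq f(S\cap K_0)$ and, as $\beta\downarrow 0$, $\sum_j\M_{S;U}(F_j)\uparrow\M_{S;U}(K_0\setminus\bigcup_j\pa Q_j)=\M_{S;U}(K_0)$.
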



 It was proved in \cite{LR} that the natural parametrization function $\Theta_t$ of a chordal SLE$_\kappa$ curve $\gamma$ (under capacity parametrization) in $\HH$ from $0$ to $\infty$ at any time $t$ agrees with the Minkowski content of $\gamma([0,t])$. Using their result, one can easily show that such curve $\gamma$ possesses Minkowski content measure in $\HH$, which is the pushforward measure of $d\Theta$ under the function $\gamma$. The existence of Minkowski content measure extends to any simply connected domain (\cite[Remark 2.7]{loop}), although the natural parametrization (in the usual sense) does not always exist  because the Minkowski content of $\gamma([0,t])$ may be infinite due to the roughness of the domain boundary. Moreover,  the following proposition holds.

\begin{Proposition}\cite[Proposition 2.8]{loop}
	Let $D$ be a simply connected domain with two distinct prime ends $a$ and $b$. Then
	$$ \mu^\#_{D;a\to b}(d\gamma)\otimes {\cal M}_{\gamma;D}(dz)=\nu^\#_{D;a\to z\to b}(d\gamma)\overleftarrow{\otimes} (  G_{D;a, b}\cdot \mA^2)(dz).$$
	\label{decomposition-Thm}
\end{Proposition}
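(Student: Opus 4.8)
The plan is to prove the claimed identity of measures by testing it against product functions $F(\gamma)\phi(z)$ and reducing to a statement about the one-point Green's function. First I would fix a bounded continuous functional $F$ on curves (modulo reparametrization) and a nonnegative continuous $\phi$ with compact support in $D$, and compute $\int F(\gamma)\,\mathcal M_{\gamma;D}(dz)$ against $\mu^\#_{D;a\to b}$. Using the definition of the Minkowski content measure and Fubini, this equals $\lim_{r\downarrow 0} r^{d-2}\int_D \phi(z)\,\mu^\#_{D;a\to b}\{\gamma:\dist(\gamma,z)<r\}\cdot\mathbb E[F\mid \gamma\ni B(z;r)]\,\mathbb{A}^2(dz)$, which, letting $r\downarrow 0$, should produce $\int_D \phi(z) G_{D;a,b}(z)\,\mathbb{A}^2(dz)$ weighted by the conditional law of $\gamma$ given that it passes through $z$. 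The content of the proposition is precisely that this conditional law is $\nu^\#_{D;a\to z\to b}$, i.e.\ a two-sided radial SLE from $a$ to $b$ through $z$. Thus the core step is the \emph{pointwise} statement: for $\mathbb{A}^2$-a.e.\ $z_0\in D$, the regular conditional distribution of a chordal SLE$_\kappa$ curve given $\{\dist(\gamma,z_0)<r\}$ converges weakly (as $r\downarrow 0$) to $\nu^\#_{D;a\to z_0\to b}$, with the normalizing mass $r^{d-2}\mu^\#\{\dist(\gamma,z_0)<r\}\to G_{D;a,b}(z_0)$.

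To establish that pointwise statement I would use the standard Girsanov/Radon-Nikodym description of SLE conditioned to pass near an interior point. By conformal covariance of the Green's function \eqref{Green-covariance} it suffices to treat one convenient normalization, say $D=\HH$, $a=0$, $b=\infty$, $z_0=i$; one maps the general case back via a conformal equivalence, picking up the Jacobian factors $|f'|^{2-d}$ in the Green's function and $|f'|^{d}$ in the content measure (Proposition \ref{conformal-content}), which combine correctly because $(2-d)+d=2$ matches the dimension of area measure. For fixed $z_0$ one runs the chordal Loewner flow up to the first time $\sigma_r$ the capacity of the curve forces $\dist(\gamma,z_0)$ below $r$; the Radon-Nikodym derivative of ``SLE conditioned to get within $r$ of $z_0$'' with respect to plain SLE, restricted to the filtration at time $\sigma_r$, is a ratio of SLE Green's functions in the slit domain, and as $r\downarrow 0$ this martingale converges to the Radon-Nikodym derivative that turns the first arm into a whole-plane (equivalently radial) SLE$_\kappa(2)$ aimed at $z_0$. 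Once the first arm has been grown all the way to $z_0$, the domain Markov property of chordal SLE shows the remainder of the curve is, conditionally, a chordal SLE$_\kappa$ from $z_0$ to $b$ in the appropriate complementary component — which is exactly the second arm in the definition of $\nu^\#_{D;a\to z_0\to b}$. The reversed tensor notation $\overleftarrow{\otimes}$ records that one first samples $z$ and then the two-sided curve through $z$.

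The main obstacle I expect is the uniform control needed to pass from the ``near $z_0$'' events to the ``through $z_0$'' limit: one needs (i) existence and finiteness of the limit defining $G_{D;a,b}$ together with a two-sided comparability estimate $r^{d-2}\mu^\#\{\dist(\gamma,z_0)<r\}\asymp G_{D;a,b}(z_0)$ that is uniform over $z_0$ in compact subsets of $D$, and (ii) tightness/uniform integrability of the conditioned laws so that the weak limit can be identified and then integrated against $\phi\,\mathbb{A}^2$. These are the technical heart; the existence of the Green's function limit and the associated estimates are cited from the literature on the natural parametrization, and the martingale convergence identifying the conditioned law is the Girsanov computation underlying the very definition of two-sided SLE. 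Assembling these, Fubini plus the pointwise limit upgrade to the asserted identity of measures on curve-space times $D$, completing the proof.
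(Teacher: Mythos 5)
The paper states Proposition \ref{decomposition-Thm} as a citation to \cite[Proposition 2.8]{loop} (and ultimately \cite[Corollary 4.3]{decomposition}) and gives no proof of its own, so there is no internal argument to compare against. Your outline is the standard strategy used in those references and is essentially correct: test the measure identity against product functionals $F(\gamma)\phi(z)$, write the Minkowski content measure as an $r\downarrow 0$ limit, apply Fubini, and reduce to (i) convergence of $r^{d-2}\mu^\#_{D;a\to b}\{\dist(\gamma,z_0)<r\}$ to $G_{D;a,b}(z_0)$ together with a uniform two-sided bound that justifies the limit/integral exchange, and (ii) weak convergence of the conditioned chordal SLE laws to $\nu^\#_{D;a\to z_0\to b}$, obtained by a Girsanov-type computation with the Green's-function martingale. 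You correctly identify the uniform Green's function estimates as the technical heart.

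Two points should be tightened. First, the first arm of $\nu^\#_{D;a\to z_0\to b}$ is a \emph{radial} SLE$_\kappa(2)$ curve from $a$ to $z_0$ with force point at $b$, not a whole-plane SLE$_\kappa(2)$; your parenthetical ``whole-plane (equivalently radial)'' conflates two distinct objects, the whole-plane arm being the one that appears in the two-sided \emph{whole-plane} decomposition where the start is an interior point rather than a prime end. Second, the claim that ``the domain Markov property of chordal SLE shows the remainder of the curve is, conditionally, a chordal SLE$_\kappa$ from $z_0$ to $b$'' is misleading as stated: the conditioned/tilted curve is no longer chordal SLE, so its domain Markov property cannot be invoked directly. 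The correct reasoning is that the Radon--Nikodym derivative against chordal SLE is a martingale built from $G_{D_t;\gamma(t),b}(z_0)/G_{D;a,b}(z_0)$, which is measurable with respect to the first arm and hence leaves the conditional law of the remainder unchanged; one then applies the domain Markov property of the \emph{untilted} chordal SLE to conclude the second arm is chordal. This is precisely what the two-sided radial definition in Section \ref{Section-two-sided} encodes and what the martingale computation in \cite{decomposition} supplies.
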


This proposition first appeared in \cite[Corollary 4.3]{decomposition} (in a slightly different form). It means that, there are two ways to sample a random curve-point pair $(\gamma,z)$ according to the same measure. Method $1$: first sample a chordal SLE$_\kappa$ curve $\gamma$ in $D$ from $a$ to $b$, and then sample a point $z$ on $\gamma$ according to the Minkowski content measure of $\gamma$ in $D$. Method $2$: first sample a point $z$ according to the density $G_{D;a, b}$, and then sample a two-sided radial SLE$_\kappa$ curve $\gamma$ in $D$ from $a$ to $b$ passing through $z$. The reader is referred to \cite[Section 2.1]{loop} for the meaning of the symbols $\otimes$ and $\overleftarrow{\otimes}$ for the products between a measure and a kernel.

\section{Finite Moments} \label{proof-key-lem}
We prove Lemma \ref{integrable*} and Theorem \ref{not-Holder} in this section. We divide the proof of Lemma \ref{integrable*} into two subsections according to the sign of the exponent $c$.

Let $\gamma$ be as in Proposition \ref{stationary*}. Let $\gamma_+=\gamma|_{[0,\infty)}$ and $\gamma_-=\gamma|_{(-\infty,0]}$ be the two arms of $\gamma$, both connecting $0$ with $\infty$. Let $\ha\C_{\gamma_-}$ be the unbounded connected component of $\C\sem \gamma_-$. Recall that $\gamma_-$ is a whole-plane SLE$_\kappa(2)$ curve from $\infty$ to $0$; and given $\gamma_-$, $\gamma_+$ is a chordal SLE$_\kappa$ curve from $0$ to $\infty$ in $\ha\C_{\gamma_-}$.  Let $\gamma^R_-(t)=\gamma(-t)$, $0\le t<\infty$, be the time-reversal of $\gamma_-$. From the reversibility of whole-plane SLE$_\kappa(2)$ (\cite[Theorem 1.20]{MS4}), we know that $\gamma^R_-$ is a whole-plane SLE$_\kappa(2)$ curve from $0$ to $\infty$.

For $0\le t<\infty$, let $D_t$ denote the complement domain of $\gamma^R_-([0,t])$ in $\ha\C$ that contains $\infty$. Then $K_t:=\ha\C\sem D_t$ is an interior hull in $\C$.
An interior hull in $\C$ is a nonempty connected compact set $K\subset\C$ such that $\C\sem K$ is connected. It is called nondegenerate if it contains more than one point. For a nondegenerate interior hull $K$, there exists a unique conformal map $g_K$ from $\ha\C\sem K$ onto $\D^*$ such that $g_K(\infty)=\infty$ and $g_K'(\infty):=\lim_{z\to\infty} z/g_K(z)>0$. The   capacity of $K$ is defined to be $\ccap(K)=\log(g_K'(\infty))$. If $K$ is degenerate, i.e., a singlet, we define $\ccap(K)=-\infty$. From Schwarz Lemma and Koebe $1/4$ Theorem, we know that $e^{\ccap(K)}\le\diam(K)\le 4e^{\ccap(K)}$ for any interior hull $K$ in $\C$.

Let $(\F_t)_{t\ge 0}$ denote the filtration generated by $(\gamma^R_-(t))_{t\ge 0}$.
If $T\in(0,\infty)$ is an $(\F_t)$-stopping time, then conditional on $\F_{T}$, the part of $\gamma^R_-$ after $T$ is an SLE$_\kappa(2)$ curve in $D_{T}$ from $\gamma^R_-(T)$ to $\infty$ with a force point at $0$. Let $u(t)=\ccap(K_t)$. Then $u$ is an increasing homeomorphism from $[0,\infty)$ onto $[-\infty,\infty)$ and for every $r\in\R$, $u^{-1}(r)$ is an $(\F_t)$-stopping time.

For $R,r>0$, let $\tau_R$ denote the hitting time (i.e., the first visiting time) of the circle $\{|z|=R\}$; and let $\tau_r^R$ denote the last visiting time of the circle  $\{|z|=r\}$ before $\tau_R$. Here we set $\tau_R$ to be $\infty$ when the former time does not exist, and set $\tau_r^R$ to be $\tau_R$ when the latter time does not exist.  Note that $\tau_R$ is a stopping time, but $\tau_r^R$ is not in general. For any $R>0$, since $K_{\tau_R}\subset \{|z|\le R\}$ and $\diam(K_{\tau_R})\ge R$, we have $\log(R)\ge \ccap(K_{\tau_R})\ge \log(R/4)$, and so $\tau_R\le u^{-1}(\log R)\le \tau_{4R}$.

\subsection{Negative power}
\begin{proof}[Proof of Lemma \ref{integrable*} for $c\le 0$.]
Let $f_c(z)={\bf 1}_{\{|z|\le 1\}}|z|^c$. We will show that \BGE \EE\Big[\int_0^\infty f_c(\gamma(t)) dt\Big]<\infty.\label{int-infty*}\EDE
From the scaling property of $\gamma$, we get
$$\EE\Big[\int_0^\infty f_c(\gamma(t)) dt\Big]\ge \int_0^1 \EE[f_c(\gamma(t))]dt
=\int_0^1 \EE[f_c(t^{1/d}\gamma(1))]dt$$$$=\int_0^1 \EE[{\bf 1}_{\{|\gamma(1)|\le t^{-1/d}\}}t^{c/d}|\gamma(1)|^{c}]dt\ge \EE[f_c (\gamma(1))] \int_0^1 t^{c/d}dt.$$
Thus, (\ref{int-infty*}) implies that $\EE[f_c(\gamma(1))]<\infty$, which then implies that $\EE[|\gamma(1)|^c]<\infty$.

Since the whole-plane SLE$_\kappa(2)$ curve $\gamma(t)$, $-\infty<t<0$, visits its target $\gamma(0)=0$ only at the end,  we have a.s.\   $\gamma(0)\not\in \gamma_-= \gamma((-\infty,0))$. Since $\gamma$ has stationary increment, we see that for any $t>0$, a.s.\ $\gamma(t)\not\in \gamma_-$. By Fubini Theorem, the Lebesgue measure of the set of $t>0$ such that $\gamma(t)\in\gamma_-$ is a.s.\ zero. Since $\gamma$ is parametrized by Minkowski content measure ${\cal M}_\gamma$, i.e., ${\cal M}_\gamma$ is the pushforward measure of the Lebesgue measure under the curve function $\gamma$, we then conclude that, a.s.\ the set $\gamma_+\cap \gamma_-$ has Minkowski content measure zero. We know that $\gamma_+$ is contained in the closure of $\ha\C_{\gamma_-}$, and when $\gamma_+$ touches the boundary of $\ha\C_{\gamma_-}$, it intersects $\gamma_-$. Thus, the set of points on $\gamma_+$ that is not strictly contained in $\ha\C_{\gamma_-}$ has zero Minkowski content measure almost surely. So we have a.s.\ ${\cal M}_{\gamma_+}={\cal M}_{\gamma_+;\ha\C_{\gamma_-}}$.

Since $\gamma_+$ is a chordal SLE$_\kappa$ curve from $0$ to $\infty$ in $\ha\C_{\gamma_-}$, from Proposition \ref{decomposition-Thm} and that a.s.\ ${\cal M}_{\gamma_+}={\cal M}_{\gamma_+;\ha\C_{\gamma_-}}$, we get
\BGE \mu^\#_{\ha\C_{\gamma_-};0\to\infty}(d\gamma_+)\otimes {\cal M}_{\gamma_+}(dz)=\nu^\#_{\ha\C_{\gamma_-};0\to z\to\infty}(d\gamma_+)\overleftarrow{\otimes} (G_{\ha\C_{\gamma_-};0,\infty}\cdot \mA^2)(dz).\label{decomposition*}\EDE
Since $\gamma$ is parametrized by Minkowski content, we have
$ \int_0^\infty f_c(\gamma(t))dt = \int_{\C} f_c(z){\cal M}_{\gamma_+}(dz)$.
Since the law of $\gamma_+$ given $\gamma_-$ is $\mu^\#_{\ha\C_{\gamma_-};0\to\infty}$, using (\ref{decomposition*}) we get
$$\EE\Big[\int_{\C} f_c(z){\cal M}_{\gamma_+}(dz)\Big|\gamma_- \Big]=\int f_c(z) [\mu^\#_{\ha\C_{\gamma_-};0\to\infty}(d\gamma_+)\otimes {\cal M}_{\gamma_+}](dz)$$
$$=\int f_c(z) [\nu^\#_{\ha\C_{\gamma_-};0\to z\to\infty} \overleftarrow{\otimes} (G_{\ha\C_{\gamma_-};0,\infty}\cdot \mA^2)](dz)=\int f_c(z) G_{\ha\C_{\gamma_-};0,\infty}(z)\mA^2(dz).$$
Thus, we have
\BGE \EE\Big[\int_0^\infty f_c(\gamma(t))dt\Big]=\int_{\C} f_c(z) \EE[G_{\ha\C_{\gamma_-};0,\infty}(z)]\mA^2(dz).\label{EG1}\EDE
We will prove that
\BGE \EE[G_{\ha\C_{\gamma_-};0,\infty}(z_1)]<\infty,\quad \forall z_1\in\C\sem \{0\}.\label{EG*}\EDE
If this holds, by scaling and rotation invariance of $\gamma_-$ and conformal covariance of chordal SLE$_\kappa$ Green's function, we see that $\EE[G_{\ha\C_{\gamma_-};0,\infty}(z)]=C|z|^{d-2}$, $z\in\C\sem\{0\}$, for some constant $C=C_\kappa\in(0,\infty)$.
Combining it with (\ref{EG1}), we get  $\EE[\int_0^\infty f_c(\gamma(t))dt]=C\int_{\{|z|\le 1\}} |z|^{c+d-2}\mA^2(dz)$. Using $c>-d$, we get (\ref{int-infty*}) and finish the proof.

Now it remains to prove (\ref{EG*}).
We claim that there are constants $C,R\in(1,\infty)$ such that the following holds. Let $D\subsetneqq \C$ be a simply connected domain with distinct prime ends $a,b$. Let $z_0,z_1$ be distinct points in $D$ such that $\dist(z_0,\pa D)\ge R|z_1-z_0|$.   Let $\beta$ have the law $\nu^\#_{D;a\to z_0;b}$, and   $D_\beta$ be the complement domain of $\beta$ in $D$ that has $b$ as a prime end. Then \BGE \EE[G_{D_\beta;z_0,b}(z_1)]\le C|z_1-z_0|^{d-2}\label{2-sided-Green}\EDE

If the claim holds true, we can finish the proof as follows. Fix $z_1\in\C\sem \{0\}$. Let $T=\tau_{R|z_1|}$. Let $D^T$ be the complement domain of $\gamma_-((-\infty,T])$ that contains $0$. Conditional on the part of $\gamma_-$ before $T$, the remaining part of $\gamma_-$, denoted by $\beta_0$, is a  radial SLE$_\kappa(2)$ curve in  $D^T$ from $\gamma(T)$ to $0$ with a force point at $\infty$. Then we have $(D^T)_{\beta_0}=\ha\C_{\gamma_-}$, and $\dist(0,\pa D^T)=R|0-z_1|$.  Applying the claim to $D=D^T$,  $a=\gamma(T)$, $b=\infty$, $z_0=0$, and $\beta=\beta_0$, we get (\ref{EG*}) because
$$\EE[G_{\ha\C_{\gamma_-};0,\infty}(z_1)]=\EE[\EE[G_{\ha\C_{\gamma_-};0,\infty}(z_1)|\gamma_{(-\infty,T]}]]=\EE[G_{(D^T)_{\beta_0};0,\infty}(z_1)]\le C |z_1|^{d-2}<\infty.$$

To prove the claim, by Koebe distortion Theorem and the conformal covariance of chordal SLE$_\kappa$ Green's function, it suffices to work on the case that $D=\HH:=\{z\in\C:\Imm z>0\}$, $a=0$ and $b=\infty$. 
To estimate $\EE[G_{\HH_\beta;z_0,b}(z_1)]$, we use the formula
\BGE \overrightarrow G_{\HH;0,\infty}(z_0,z_1)=G_{\HH;0,\infty}(z_0) \EE[G_{\HH_\beta;z_0,\infty}(z_1)],\label{LW*}\EDE
where $\overrightarrow G_{\HH;0,\infty}(z_0,z_1)$ is the {\it ordered} two-point Green's function for the chordal SLE$_\kappa$ curve in $\HH$ from $0$ to $\infty$, i.e.,
$$\overrightarrow G_{\HH;0,\infty}(z_0,z_1):=\lim_{r_0,r_1\downarrow 0} r_0^{d-2} r_1^{d-2} \mu^\#_{\HH;0\to \infty}[\tau^{z_0}_{r_0}<\tau^{z_1}_{r_1}<\infty],$$
where $\tau^{z_j}_{r_j}$ is the hitting time of $\{|z-z_j|=r_j\}$.

Formula (\ref{LW*}) was first derived in \cite[Theorem 1]{LW}, where the Euclidean distance is replaced with conformal radius. The Euclidean distance version of (\ref{LW*}) was later derived in \cite[Proposition 4.5]{LR}. A sharp bound for the {\it unordered} two-point Green's function:
$$ G_{\HH;0,\infty}(z_0,z_1):=\lim_{r_0,r_1\downarrow 0} r_0^{d-2} r_1^{d-2}  \mu^\#_{\HH;0\to \infty}[\tau^{z_0}_{r_0},\tau^{z_1}_{r_1}<\infty]$$
is obtained in \cite{LR2}. Combining it with (\ref{LW*}) and the exact formula for $G_{\HH;0,\infty}(z_0)$, we then get an upper bound for $\EE[G_{\HH_\beta;z_0,\infty}(z_1)]$.

Now we show the computation. For convenience, we use an upper bound of $G_{\HH;0,\infty}(z_0,z_1)$ in the form derived in \cite[Remark 2 after Theorem 1.2]{RZ}:
\BGE G_{\HH;0,\infty}(z_0,z_1)\le C_2 \frac{y_0^{\alpha-(2-d)}}{P_{y_0}(l_0)}\cdot \frac{y_1^{\alpha-(2-d)}}{P_{y_1}(l_1)},\label{2-point*}\EDE
where $C_2\in(0,\infty)$ is a constant, $\alpha=\frac 8\kappa-1$, $y_j=\Imm z_j$, $j=0,1$, $l_0=|z_0|$, $l_1=|z_1|\wedge |z_1-z_0|$, and
$P_y(x)=x^\alpha$ if $x\ge y$; $P_y(x)=y^{\alpha-(2-d)}x^{2-d}$ if $0\le x\le y$. From \cite[Theorem 4.2]{LR} we know that there is a constant $C_1\in(0,\infty)$ such that
\BGE G_{\HH;0,\infty}(z_0)=C_1 y_0^{\alpha-(2-d)}|z|^{-\alpha}=C_1 \frac{y_0^{\alpha-(2-d)}}{P_{y_0}(l_0)}.\label{1-point*}\EDE
Since $\overrightarrow G_{\HH;0,\infty}(z_0,z_1)\le G_{\HH;0,\infty}(z_0,z_1)$, combining (\ref{LW*},\ref{2-point*},\ref{1-point*}) we get
$$\EE[G_{\HH_\beta;z_0,\infty}(z_1)]\le \frac{C_2}{C_1}\cdot \frac{y_1^{\alpha-(2-d)}}{P_{y_1}(l_1)}.$$
Suppose $y_0=\dist(z_0,\R)\ge R|z_1-z_0|$ with $R=2$. Then $|z_1-z_0|\le y_0-|z_1-z_0|\le y_1\le |z_1|$. So $l_1=|z_1-z_0|\le y_1$, and the above displayed formula implies (\ref{2-sided-Green}) because $P_{y_1}(l_1)= y_1^{\alpha-(2-d)}|z_1-z_0|^{2-d}$. So the proof is complete.
\end{proof}

\subsection{Positive power}

Let $\cal D$ denote the set of simply connected domains $D\subsetneqq \ha\C$ such that $\ha\C\sem D$ is a nondegenerate interior hull in $\C$ containing $0$. Note that $\infty\in D$ for $D\in\cal D$.

\begin{Lemma}
	There exist $R>r>4$ such that for any $p>0$   there exists $l>0$ such that for any $D\in\cal D$ with distinct prime ends $a,b$,
	$$\nu^\#_{D;a\to \infty;b}(\{\beta:\Cont(\beta([\tau^{Re^{\ccap(D^c)}}_{re^{\ccap(D^c)}},{\tau_{Re^{\ccap(D^c)}}}]))<l*e^{d*\ccap(D^c)}\})<p.$$
	\label{plrR2*}
\end{Lemma}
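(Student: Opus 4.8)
**Proof plan for Lemma \ref{plrR2*}.**

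The plan is to reduce to a universal (domain-independent) statement by scaling, and then prove that a radial $\SLE_\kappa(2)$ curve crossing an annulus must accumulate a definite amount of Minkowski content with overwhelming probability. First I would normalize: by the scaling covariance of $\SLE$ and of Minkowski content (Proposition \ref{conformal-content}), we may rescale $D$ so that $\ccap(D^c)=0$, i.e.\ $\diam(D^c)\asymp 1$; the quantity $e^{d\ccap(D^c)}$ in the statement is exactly the scaling factor for content of index $d$, so after rescaling the claim becomes: there exist $R>r>4$ such that for every $D\in\cal D$ with $\ccap(D^c)=0$ and distinct prime ends $a,b$, and every small $p>0$, there is a small $l>0$ with
\BGE \nu^\#_{D;a\to\infty;b}\big(\{\beta:\Cont(\beta([\tau^R_r,\tau_R]))<l\}\big)<p,\EDE
where $\tau_R$ is the first hitting time of $\{|z|=R\}$ and $\tau^R_r$ is the last visit to $\{|z|=r\}$ before $\tau_R$. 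Because $\ccap(D^c)=0$ forces $D^c$ (which contains $a$) to lie in $\{|z|\le 4\}$ and to have diameter at least $1$, both the starting point and the geometry are controlled uniformly; in particular, since $r>4$, the subcurve $\beta([\tau^R_r,\tau_R])$ lies in the annulus $\{r-o(1)\le|z|\le R\}$, at definite distance from $D^c$ and from $\infty$.

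Next I would establish the content lower bound in two stages. Stage one: show that with probability $\ge 1-p/2$ (uniformly over $D$ and over the marked points), the curve $\beta$ restricted to the time interval $[\tau^R_r,\tau_R]$ contains a ``nice'' sub-arc $\eta$ that crosses a fixed annulus, say $\{r+1\le|z|\le r+2\}$, while staying in a ball $B\subset D$ of fixed radius that is bounded away from $\partial D$ — this is a crossing/confinement estimate for radial $\SLE_\kappa(2)$, which is where Lemma \ref{plrR2*} is complemented by the other crossing lemma promised in Section \ref{proof-key-lem}; I would cite or prove it via the Loewner equation and Koebe distortion, using that the force point and the target are at definite distance so the process is comparable to an ordinary radial/chordal $\SLE_\kappa$ away from them. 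Stage two: conditionally on the event that such an arc exists inside the fixed ball $B$, use Proposition \ref{decomposition-Thm} (or rather its consequence already used in the proof of Lemma \ref{integrable*}) to lower bound $\Cont(\eta)$: the expected Minkowski content of an $\SLE_\kappa$ arc crossing a fixed annulus inside a fixed domain is bounded below by a positive constant (it equals the integral of the chordal Green's function over that annulus, which is strictly positive), and a second-moment or Paley–Zygmund argument then gives a uniform lower bound $\Cont(\eta)\ge c_0$ with probability bounded away from $0$. Combining the two stages and iterating the second over a definite number of disjoint sub-annuli between radius $r$ and radius $R$ (choosing $R$ large enough relative to $r$ that, say, $\lfloor (R-r)/3\rfloor$ independent-ish trials are available) boosts the success probability to $\ge 1-p/2$, at the cost of a content threshold $l$ that depends on $p$ but not on $D$.

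The main obstacle I expect is the uniformity over the class $\cal D$ and over the prime ends $a,b$: the force point $b$ and the chordal target that follows can be arbitrarily close to $a$, so the driving and force-point processes $(\lambda_t,q_t)$ from \eqref{SDE-lambda-q} can behave wildly near the start. The resolution is precisely that we only look at the curve after it has reached $\{|z|=r\}$ with $r>4\ge\diam(D^c)$: by that time the conformal image of the remaining domain, as seen from the relevant annulus, is close to $\D^*$ with the curve tip and the force point both having escaped to comparable scales, so the Loewner maps $g_t$ have bounded distortion on the annulus $\{r\le|z|\le R\}$, and one can transfer crossing and content estimates from a fixed reference configuration by Koebe's theorem with constants depending only on $r$ and $R$. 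The capacity-normalization $\ccap(D^c)=0$, together with the inequalities $e^{\ccap(K)}\le\diam(K)\le 4e^{\ccap(K)}$ recalled in the text, is what makes this ``bounded geometry at scale $1$'' rigorous and uniform.
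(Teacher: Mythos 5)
Your reduction by scaling to $\ccap(D^c)=0$ is correct, and you correctly identify the hard point (uniformity over the marked points $a,b$), but your proposed resolution of it does not work. You claim that once the curve has reached $\{|z|=r\}$ with $r>4\ge\diam(D^c)$, Koebe distortion gives ``bounded geometry at scale $1$'' with the curve tip and the force point ``both having escaped to comparable scales,'' and that this transfers crossing and content estimates from a fixed reference configuration. Koebe distortion does control how the Loewner maps $g_t$ distort the annulus $\{r\le|z|\le R\}$ in the Euclidean sense, but it says nothing about the \emph{angular} separation $X_t=\lambda_t-q_t$ between the driving point $e^{i\lambda_t}$ and the force point $e^{iq_t}$ on $\TT$ in the uniformized picture. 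Even after the curve has crossed $\{|z|=r\}$, that angle can be arbitrarily close to $0$ or $2\pi$; and for radial $\SLE_\kappa(2)$ the whole behavior of the remaining curve (and in particular the expected Minkowski content your Stage two relies on) depends on $X_t$, so you have not ruled out degenerate configurations. This is precisely the gap the paper closes with the diffusion estimate (Proposition~\ref{diffusion-estimate}): after a fixed capacity time $t_0$, the law of $X_{t_0}$ has a density uniformly comparable to $\sin(y/2)^{8/\kappa}$ \emph{regardless of the starting value}, which yields uniform two-sided absolute continuity with a single fixed reference measure $\nu^\#$ on curves.

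Your Stage two is also heavier than necessary and incompletely specified: you would need a second-moment (two-point Green's function) estimate for a \emph{radial} $\SLE_\kappa(2)$ arc \emph{conditioned} to stay in a ball, which is not in the cited toolbox, and Paley--Zygmund then only gives a probability bounded away from zero, which you then have to iterate over annuli and justify independence/Markov decoupling. The paper avoids all of this: once the conditional law of the post-$t_0$ curve is shown to be absolutely continuous (with bounded Radon--Nikodym derivative) with respect to the law of a fixed reference radial $\SLE_\kappa(2)$ curve $\til\beta$ started from a random boundary point with density $\propto\sin(x/2)^{8/\kappa}$, the conclusion follows from the soft fact that $\Cont(\til\beta([\tau^8_7,\tau_8]))>0$ almost surely, so $\PP[\Cont<l]\to 0$ as $l\to 0$. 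In short: your overall architecture (normalize, find a crossing, bound content from below) is a plausible alternative, but the essential uniformity input is missing, and without it neither your Stage one nor your Stage two can be made uniform over $a,b$; you should either prove the equilibration estimate for $X_t$ or find a substitute for it.
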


We now use Lemma \ref{plrR2*} to prove Lemma \ref{integrable*} for positive powers, and postpone the proof of Lemma \ref{plrR2*} to the next section.

\begin{proof}[Proof of Lemma \ref{integrable*} for $c> 0$.]
We work on the whole-plane SLE$_\kappa(2)$ curve $\gamma^R_-$ from $0$ to $\infty$.
 By the stationary increments of $\gamma$, $\diam(\gamma([0,1]))$ has the same law as $\diam(\gamma^R_-([0,1]))$. So it suffices to show that $\diam(\gamma^R_-([0,1]))$ has finite moments of any positive order.
	
	Let $R>r\in(4,\infty)$ be given by Lemma \ref{plrR2*}. Fix $p>0$. Let $l=l(p,r,R)>0$ be given by Lemma \ref{plrR2*}. Let  $s_n=\log(R^n/l^{1/d})$ for $n\in\N\cup\{0\}$. 
	Applying Lemma \ref{plrR2*} to $D=D_{u^{-1}(s_k)}$,  $a=\gamma^R_-(u^{-1}(s_k))$ and $b=0$, we get
	$$\PP[u^{-1}(s_{k+1})-u^{-1}(s_k)<l*e^{d*s_k}|\F_{u^{-1}(s_k)}]<p,\quad k=0,1,2,\dots.$$
	Here we use the facts that given $\F_{u^{-1}(s_k)}$, $\gamma^R_-({u^{-1}(s_k)}+t)$, $t\ge 0$, has the law of $\nu^\#_{D;a\to z_0;b}$; $u^{-1}(s_{k+1})-u^{-1}(s_k)=\Cont(\gamma^R_-([u^{-1}(s_k),u^{-1}(s_{k+1})]))$;  $u^{-1}(s_{k+1})\ge \tau_{e^{s_k+1}}=\tau_{Re^{s_k}}$, $u^{-1}(s_k)\le \tau_{4 e^{s_k}}< \tau_{re^{s_k}}\le \tau_{re^{s_k}}^{Re^{s_k}}$; and $s_k=\ccap(D^c)$.  Since $u^{-1}(s_k)-u^{-1}(s_{k-1})$ is $\F_{u^{-1}(s_k)}$-measurable and $l*e^{d*s_k}=R^{d*k}\ge 1$, we get
	$$\PP[s_n<u(1)]=\PP[u^{-1}(s_n)< 1]\le \PP\Big[\bigcap_{k=1}^n \{u^{-1}(s_k)-u^{-1}(s_{k-1})< l* e^{d*s_k}\}\Big]<p^n,\quad n\in\N.$$
	Since $\diam(K_1)\ge \diam(\gamma^R_-([0,1]))$ and $u(1) =\ccap(K_1)\ge \log(\diam(K_1)/4)$, we see that $\diam(\gamma^R_-([0,1]))>4R^n/l^{1/d}$ implies that $u(1)>s_n$. Thus,
	$\PP[\diam(\gamma^R_-([0,1]))>4R^n/l^{1/d}]<p^n$. This shows that $\EE[\diam(\gamma^R_-([0,1]))^c]<\infty$ if $c<\log(1/p)/\log(R)$. Since $R$ is fixed, taking $p$ arbitrarily small completes the proof.
\end{proof} 

\subsection{Critical exponent}

\begin{Lemma} Suppose $\kappa\in(0,4]$.
	There exist $R>r>4$ such that for any $l>0$ there exists $p>0$ such that for any $D\in {\cal D}$ with distinct prime ends $a,b$,
	$$\nu^\#_{D;a\to \infty;b}(\{\beta:\Cont(\beta([\tau^{Re^{\ccap(D^c)}}_{re^{\ccap(D^c)}},\tau_{Re^{\ccap(D^c)}}]))<l*e^{d*\ccap(D^c)}\})>p.$$
	\label{plrR3*}
\end{Lemma}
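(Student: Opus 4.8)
\textbf{Proof proposal for Lemma \ref{plrR3*}.}

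The plan is to mirror the structure used for Lemma \ref{plrR2*} (whose proof appears later in the paper) but to drive the inequality in the opposite direction, which is exactly where the restriction $\kappa\in(0,4]$ is needed. First I would reduce to a single scale by the scaling invariance of radial SLE$_\kappa(2)$: since $D\in\mathcal D$ has $\ccap(D^c)=:s$ finite, applying the conformal map $g_{D^c}$ (or just rescaling by $e^{-s}$) sends the problem to a domain of capacity $0$, i.e., one whose complement has diameter comparable to $1$. Under this rescaling the curve segment $\beta([\tau^{Re^s}_{re^s},\tau_{Re^s}])$ becomes a segment between radii $r$ and $R$ of a radial SLE$_\kappa(2)$ curve in a unit-scale domain, and by Proposition \ref{conformal-content} (conformal covariance of Minkowski content with exponent $d$) the content $\Cont(\cdot)$ picks up exactly a factor $e^{ds}$, so the event $\{\Cont(\cdots)<l\,e^{ds}\}$ becomes $\{\Cont(\cdots)<l\}$ up to bounded distortion constants coming from Koebe. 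So it suffices to produce, uniformly over unit-scale domains $D$ and prime ends $a,b$, a lower bound $p>0$ for the probability that the content of the crossing segment is at most $l$.

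Next I would show this lower bound is uniform over the domain and marked points. The curve segment lives in the annular region $\{re^s\le|z|\le Re^s\}$, which after rescaling is $\{r\le|z|\le R\}$ — a region that, for $R>r>4$ and since $D^c$ has diameter at most $4e^s$, stays a definite distance from $D^c$; hence the relevant part of the radial SLE$_\kappa(2)$ law is, by the local equivalence / Radon–Nikodym comparison between radial SLE$_\kappa(2)$ in different domains (the force point at $b$ and the domain boundary being far away), comparable with bounded density to, say, the law of a radial SLE$_\kappa(2)$ in $\D^*$ from a fixed boundary point. So it is enough to prove the estimate for one fixed reference curve, and the uniformity is inherited. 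I would make the ``far away'' statement precise using the Loewner-capacity bounds already recorded in the excerpt ($e^{\ccap(K)}\le\diam(K)\le 4e^{\ccap(K)}$, giving $\tau_R\le u^{-1}(\log R)\le\tau_{4R}$), so that the segment between radii $r$ and $R$ is genuinely interior.

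The heart of the matter — and the step where $\kappa\le 4$ enters — is showing that, with positive probability, the reference radial SLE$_\kappa(2)$ curve crosses the annulus $\{r\le|z|\le R\}$ while accumulating Minkowski content at most $l$. Since the expected content of a unit-scale crossing segment is a fixed finite constant (this is the $c=1$ case of Lemma \ref{integrable*}, already available), one cannot take $l$ too small for \emph{all} curves; rather, for each fixed $l>0$ we want a positive-probability event on which the content is small. The natural event is that the curve crosses the annulus along a ``short, straight'' path — more precisely that it stays within a thin tube, has small diameter increments, and in particular makes no large excursions. For $\kappa\le 4$ SLE is a simple curve, so such a crossing event has positive probability and the curve restricted to it has small length and hence (being a simple curve of controlled geometry) small Minkowski content; one quantifies ``small content on a thin tube'' by covering the tube with $O(\text{length}/\delta)$ balls of radius $\delta$ and using that the content of a simple-curve piece inside such a tube is bounded by a constant times its one-dimensional length, which can be made $<l$ by choosing the tube thin and short. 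The main obstacle is precisely this last geometric input: controlling the Minkowski \emph{content} (not just length or diameter) of the crossing segment on the favorable event, uniformly, and doing so in a way that survives the conformal distortion from the reduction step. I expect one handles it by combining the a.s.\ finiteness and conformal covariance of the content measure (Proposition \ref{conformal-content}) with a crude deterministic bound $\Cont(S)\le C\,\mathrm{diam}(S)^{2-d}\mathA(S)$-type estimate on thin tubes, together with a positive-probability ``nice crossing'' event for radial SLE$_\kappa(2)$ obtained from the simple-curve property and standard SLE crossing/boundary-avoidance estimates when $\kappa\le4$.
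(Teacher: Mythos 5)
Your reduction steps (rescaling via conformal covariance to a unit scale, then comparing radial SLE$_\kappa(2)$ laws across domains by Radon--Nikodym arguments) match the paper's strategy in spirit, though the paper carries out the comparison much more carefully: it waits a fixed capacity time $t_0$ so the force-point process has a known density (Proposition \ref{diffusion-estimate}), applies Koebe and a M\"obius map to normalize the picture, and then passes to chordal SLE via the explicit Green's-function Radon--Nikodym derivative. So far your outline is compatible with what is done.

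The gap is in the step you call ``the heart of the matter.'' Your idea is that for $\kappa\le 4$ the curve is simple, so on a positive-probability event it crosses the annulus through a thin tube, and you then try to bound the Minkowski content by a deterministic estimate of the form $\Cont(S)\lesssim \diam(S)^{2-d}\,\mA(S)$ (or ``constant times its one-dimensional length''). No such bound exists, and the quantity you propose is degenerate: an SLE$_\kappa$ curve has Hausdorff dimension $d>1$, so any nontrivial subarc has infinite one-dimensional length, while its planar Lebesgue measure is zero (for $\kappa<8$). The thin-tube covering argument also fails dimensionally, since $r^{d-2}\mA^2(S_r)$ involves the factor $r^{d-2}\to\infty$ as $r\to 0$ and a covering by finitely many small balls gives no control on the limit. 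Moreover, the mechanism by which $\kappa\le 4$ enters is not the one you identify: the paper does not use simpleness of the curve per se, but the generalized restriction property for $\kappa\le 4$, which says that $\mu^\#_{Q;-1\to 1}$ conditioned to stay in the thin strip $U_\delta$ is absolutely continuous with respect to $\mu^\#_{U_\delta;-1\to 1}$. The quantitative step you are missing is then: compute the expectation of the Minkowski content in the thin strip $U_\delta$ as the integral of the chordal Green's function $G_{U_\delta;-1,1}(z)\lesssim\dist(z,\pa U_\delta)^{\kappa/8-1}$, which yields $\int\Cont(\beta)\,\mu^\#_{U_\delta;-1\to 1}(d\beta)\lesssim\delta^{\kappa/8}\to 0$ as $\delta\to 0$; Markov's inequality then produces the desired positive-probability event of small content. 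Without this Green's-function estimate (or a substitute for it), the proposal has no way to make the content small on a positive-probability event, and the restriction-property change of measure is what makes $\kappa\le 4$ essential.
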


We now use Lemma \ref{plrR3*} to prove Theorem \ref{not-Holder}, and
 postpone the proof of Lemma \ref{plrR3*} to the next section.

\begin{proof} [Proof of Theorem \ref{not-Holder}.]
	By self-similarity and stationarity of increments, it suffices to prove that $\gamma^R_-$ is a.s.\ not H\"older continuous on $[0,\tau_1]$.   Let $R>r>4$ be given by Lemma \ref{plrR3*}. Fix $l>0$. Let $p=p(l,r,R)$ be given by Lemma \ref{plrR3*}.
	
	Let $T_n=\tau_{R^{-n}}$, $n\in\N\cup\{0\}$. Let $E_n$ denote the event that $\tau_{Re^{u(T_n)}}-\tau^{R e^{u(T_n)}}_{r e^{u(T_n)}}\le l* e^{d* u(T_n)}$.  Applying Lemma \ref{plrR3*} to $D=D_{T_n}$, $a=\gamma^R_-(T_n)$ and $b=0$, and using the fact that the law of $\gamma^R_-(T_n+t)$, $t\ge 0$, given $\F_{T_n}$, is $\nu^\#_{D;a\to \infty;b}$,
	we get $\PP[E_n^c|\F_{T_n}]<1-p$. From $u(T_n)\le \log(R^{-n})$, we get $Re^{u(T_n)}\le R^{1-n}$, and so $E_n$ is measurable w.r.t.\ $\F_{T_{n-1}}$. Thus,
	$\PP[\bigcup_{n=1}^\infty E_n]=1-\PP[\bigcap_{n=1}^\infty E_n^c]\ge 1-(1-p)^\infty=1$. If $E_n$ happens for some $n\in\N$, then $\gamma^R_-([0,\tau_1])$ contains a subcurve crossing the annulus $\{re^{u(T_n)}<|z|<R e^{u(T_n)}\}$ with time duration no more than $l*e^{d* u(T_n)}$, which implies that
	$$\sup_{0\le s<t\le \tau_1}\frac{|\gamma^R_-(t)-\gamma^R_-(s)|}{(t-s)^{1/d}}\ge \frac{R e^{u(T_n)}-r e^{u(T_n)}}{(l*e^{d* u(T_n)})^{1/d}}=\frac{R-r}{l^{1/d}}.$$
	Thus, a.s.\ $\sup_{0\le s<t\le \tau_1}\frac{|\gamma^R_-(t)-\gamma^R_-(s)|}{(t-s)^{1/d}}\ge \frac{R-r}{l^{1/d}}$. Since $R>r>0$ are fixed, taking $l>0$ arbitrarily small completes the proof.
\end{proof}

\section{Crossing Estimates} \label{Section-crossing}
The purpose of the last section is to prove Lemmas \ref{plrR2*} and \ref{plrR3*}. First, we use Koebe distortion theorem to derive the following proposition. For $a>0$, let ${\cal U}_a$ denote the set of univalent analytic functions $f$ defined on $\D$ such that $f(0)=0$ and $|f'(0)|=a$. 

\begin{Proposition}
	\begin{enumerate}
		\item [(i)] For every $a>0$ and $R_2>R_1\in(0,1)$ such that $R_2< 1/6$, there exist $r_2>r_1>0$ such that $r_2< \frac{3}{2} a R_2$, and
		\BGE \{r_1\le |z|\le r_2\}\supset f(\{R_1\le |z|\le R_2\}),\quad \forall f\in{\cal U}_a.\label{i3}\EDE
		\item[(ii)] For every $a>0$ and $R_2>R_1\in (0,1)$ such that $R_2/R_1\ge 6$,   there exist $r_2>r_1>0$ satisfying $r_2<a/4$ and $r_2/r_1> (R_2/R_1)/6$ such that
		\BGE\{r_1\le|z|\le r_2\}\subset f(\{R_1\le |z|\le R_2\}),\quad \forall f\in{\cal U}_a.\label{i1}\EDE
		\item[(iii)]	For every $a>0$ and $r_2>r_1>0$ such that $r_2<a/4$ and $r_2/r_1\ge 12$,  there exist $R_2>R_1\in (0,1)$ such that $R_2/R_1>(r_2/r_1)/12$, and
		\BGE \{R_1\le |z|\le R_2\}\subset f^{-1}(\{r_1\le |z|\le r_2\}),\quad \forall f\in{\cal U}_a.\label{i2}\EDE
	\end{enumerate} \label{Lem-Koebe}
\end{Proposition}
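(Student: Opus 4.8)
The plan is to reduce the whole statement to the classical Koebe estimates. Every $f\in{\cal U}_a$ can be written as $f=a\til f$ with $\til f\in{\cal U}_1$, and $|f(z)|=a|\til f(z)|$, while ${\cal U}_1$ consists exactly of the rotations $e^{i\theta}g$ of functions $g$ in the standard class $S$ (univalent on $\D$ with $g(0)=0$, $g'(0)=1$). Since all three assertions concern only $|f(z)|$ (and $|f^{-1}(w)|$) on round annuli centered at $0$, the phase of $f'(0)$ is irrelevant, and each claim reduces to the case $a=1$ with every radius rescaled by $a$. The three inputs I would use are: the Koebe growth theorem $\frac{|z|}{(1+|z|)^2}\le|g(z)|\le\frac{|z|}{(1-|z|)^2}$, whose right side is increasing in $|z|$ so it also bounds $|g|$ on a disk; the minimum modulus principle applied to the non-vanishing $g$ on the closed annulus $\{R_1\le|z|\le R_2\}$, which together with the growth theorem gives $\frac{R_1}{(1+R_1)^2}\le|g(z)|\le\frac{R_2}{(1-R_2)^2}$ there; and the Koebe one-quarter theorem in the rescaled form $g(\{|z|<\rho\})\supset B(0,\rho/4)$, obtained by applying the usual one-quarter theorem to $z\mapsto g(\rho z)/\rho\in S$.

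For part (i) I would simply take $r_2=a\frac{R_2}{(1-R_2)^2}$ and $r_1=a\frac{R_1}{(1+R_1)^2}$; the annulus bounds above immediately give $f(\{R_1\le|z|\le R_2\})\subset\{r_1\le|z|\le r_2\}$ for every $f\in{\cal U}_a$, and $R_2<1/6$ is precisely what forces $(1-R_2)^2>\tfrac23$, hence $r_2<\tfrac32aR_2$; the inequality $r_1<r_2$ is immediate from $\frac{R_1}{(1+R_1)^2}<R_1<R_2<\frac{R_2}{(1-R_2)^2}$. For part (ii) I would take $r_2=aR_2/4$ (so $B(0,r_2)\subset f(\{|z|<R_2\})$ by the one-quarter theorem) and $r_1=a\frac{R_1}{(1-R_1)^2}$: any $w$ with $r_1\le|w|\le r_2$ is $f(z)$ for some $|z|<R_2$, and if $|z|$ were $<R_1$ the growth theorem would give $|w|<r_1$, so $z$ lies in the closed annulus and $w\in f(\{R_1\le|z|\le R_2\})$. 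Here $r_2<a/4$ follows from $R_2<1$, and $r_2/r_1=\frac{R_2(1-R_1)^2}{4R_1}>\frac{R_2}{6R_1}$ holds because $R_1\le R_2/6<\tfrac16$ makes $(1-R_1)^2>\tfrac23$.

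Part (iii) is the same idea run backwards: given $r_1,r_2$ with $r_2<a/4$ and $r_2/r_1\ge12$, I would \emph{define} $R_2\in(0,1)$ by $\frac{R_2}{(1-R_2)^2}=r_2/a$ and $R_1\in(0,1)$ by $\frac{R_1}{(1+R_1)^2}=r_1/a$; these exist because $\rho\mapsto\frac{\rho}{(1-\rho)^2}$ is an increasing bijection of $[0,1)$ onto $[0,\infty)$ and $\rho\mapsto\frac{\rho}{(1+\rho)^2}$ an increasing bijection of $[0,1]$ onto $[0,\tfrac14]$, and $r_2/a<\tfrac14$ forces $R_2<\tfrac14$. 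The annulus bounds then give $\{R_1\le|z|\le R_2\}\subset f^{-1}(\{r_1\le|z|\le r_2\})$ at once. To check $R_1<R_2$ and $R_2/R_1>(r_2/r_1)/12$, I would use the crude two-sided estimates $\tfrac9{16}(r_2/a)\le R_2\le r_2/a$ and $r_1/a\le R_1\le4r_1/a$ (from $\rho\le\frac{\rho}{(1-\rho)^2}\le\tfrac{16}9\rho$ for $\rho\le\tfrac14$ and $\tfrac\rho4\le\frac{\rho}{(1+\rho)^2}\le\rho$ for $\rho\le1$) together with $r_2/r_1\ge12$, which yield $R_1\le4r_1/a\le r_2/(3a)<\tfrac9{16}(r_2/a)\le R_2$ and $R_2/R_1\ge\tfrac{9r_2}{64r_1}>\tfrac{r_2}{12r_1}$.

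I do not expect a conceptual obstacle once the reduction to the class $S$ is in place: everything is an application of the Koebe growth and one-quarter theorems and the minimum modulus principle. The one step with content beyond the growth theorem is part (ii), where I genuinely need $f$ of a disk to \emph{contain} a disk, which is exactly the one-quarter theorem. The real work is the bookkeeping of the numerical margins — choosing the radii so that the distortion factors $(1\pm\rho)^{\pm2}$ are absorbed and the three required inequalities $r_2<\tfrac32aR_2$, $r_2/r_1>\tfrac16(R_2/R_1)$, and $R_2/R_1>\tfrac1{12}(r_2/r_1)$ all hold with room to spare; this is what pins down the thresholds $1/6$, $6$, $12$ in the statement, and the constants chosen above are made precisely to leave that room.
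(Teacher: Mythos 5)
Your proof is correct and, for parts (i) and (iii), essentially identical to the paper's: the same reduction to $a=1$ (your further reduction to the class $S$ by factoring out a rotation is harmless since every estimate concerns only moduli), the same choices $r_2=aR_2/(1-R_2)^2$, $r_1=aR_1/(1+R_1)^2$ in (i), and the same implicit definitions of $R_1,R_2$ in (iii); your crude two-sided bounds there and the paper's short contradiction argument giving $(1-R_2)^2>1/3$ are interchangeable bookkeeping. The one genuine difference is in part (ii): you take $r_2=aR_2/4$ via the Koebe one-quarter theorem applied to $z\mapsto g(R_2z)/R_2$, whereas the paper takes $r_2=aR_2/(1+R_2)^2$ from the growth-theorem lower bound on the circle $\{|z|=R_2\}$. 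Your $r_2$ is a bit smaller, but the margin $(1-R_1)^2>2/3$ (forced by $R_1\le R_2/6<1/6$) still closes the numerology, and your route has the merit of making explicit the one step that is not a pointwise modulus estimate, namely that the $f$-image of a disk contains a disk, which the paper leaves implicit. Two small points to tighten: the one-quarter theorem yields the \emph{open} disk $\{|w|<r_2\}\subset f(\{|z|<R_2\})$, so for $|w|=r_2$ you should take closures and choose $z$ with $|z|\le R_2$ (harmless, since the target annulus $\{R_1\le|z|\le R_2\}$ is closed); and the appeal to the minimum modulus principle is unnecessary, since the Koebe growth inequalities hold pointwise and the monotonicity of $t\mapsto t/(1\pm t)^2$ already gives the two-sided annulus bound.
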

\begin{proof}
	By scaling we may assume that $a=1$. By Koebe distortion theorem, for any $f\in{\cal U}_1$,
	\BGE \frac{|z|}{(1+|z|)^2}\le |f(z)|\le \frac{|z|}{(1-|z|)^2},\quad z\in\D.\label{eqn-Koebe}\EDE
	
	(i) Suppose $1/6\ge R_2>R_1>0$. Let $r_2=\frac{R_2}{(1-R_2)^2}$ and $r_1=\frac{R_1}{(1+R_1)^2}$. Then $r_2
	> R_2 >R_1>r_1$, $r_2\le \frac{R_2}{(1-1/6)^2}<\frac {3}{2}R_2$, and we get (\ref{i3})  using (\ref{eqn-Koebe}).
	
	(ii) Suppose $R_1,R_2\in(0,1)$ and $R_2/R_1\ge 6$.  Let $r_2=\frac{R_2}{(1+R_2)^2}$ and $r_1=\frac{R_1}{(1-R_1)^2}$. Then from $R_1\le 1/6$ and $R_2\le 1$, we get $\frac{r_2}{r_1}\ge \frac{(1-1/6)^2}{(1+1)^2}\cdot\frac{R_2}{R_1}>\frac 16\cdot\frac{R_2}{R_1}$. Since $R_2/R_1\ge 6$, we get $r_2>r_1$. Formula (\ref{i1}) now follows from (\ref{eqn-Koebe}).
	
	(iii) Suppose $r_1,r_2>0$, $r_2<1/4$, and $r_2/r_1\ge 12$. There exist $R_1,R_2\in(0,1)$ such that $r_2=\frac{R_2}{(1-R_2)^2}$ and $r_1=\frac{R_1}{(1+R_1)^2}$. If $(1-R_2)^2\le 1/3$, then  $R_2=r_2(1-R_2)^2<1/3$, which then implies $(1-R_2)^2>(1-1/3)^2>1/3$, which is a contradiction. So $(1-R_2)^2>1/3$, and from $R_1<1$, we have
	$\frac{R_2}{R_1}=  \frac{(1-R_2)^2}{(1+R_1)^2}\cdot\frac{r_2}{r_1} >\frac{1/3}{4} \frac{r_2}{r_1} =\frac 1{12}\cdot \frac{r_2}{r_1}$. Since $r_2/r_1>12$, we get $R_2>R_1$. Formula (\ref{i2}) now follows from (\ref{eqn-Koebe}).
	\end{proof}

Let $\lambda_t$ and $q_t$ be the solutions of (\ref{SDE-lambda-q}). Let $X_t=\lambda_t-q_t$. Then $X_t$ is a random process staying in $(0,2\pi)$ that satisfies the SDE
\BGE dX_t=\sqrt{\kappa}dB_t+2\cot(X_t/2)dt.\label{diffusion}\EDE

\begin{Proposition}
	The process $(X_t)$ in (\ref{diffusion}) is Markovian with a transition density $p_t(x,y)$. Moreover, there exist constants $t_0\ge 1$ and  $C_1,C_2>0$ depending only on $\kappa$ such that for $t\ge t_0$, $$ C_1\sin(y/2)^{8/\kappa}\le p_t(x,y)\le C_2 \sin(y/2)^{8/\kappa},\quad \forall x,y\in(0,2\pi).$$
	\label{diffusion-estimate}
\end{Proposition}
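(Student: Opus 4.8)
The plan is to transform $(X_t)$ into a classical Jacobi (ultraspherical) diffusion and then extract the two-sided heat-kernel bound from the explicit Gegenbauer spectral decomposition.

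First I would substitute $Y_t=\cos(X_t/2)$, a decreasing bijection $(0,2\pi)\to(-1,1)$. Applying It\^o's formula to (\ref{diffusion}) and using $\sin(X_t/2)=\sqrt{1-Y_t^2}$, $\cos(X_t/2)=Y_t$ gives
\[ dY_t=-\tfrac{\sqrt\kappa}{2}\sqrt{1-Y_t^{\,2}}\;dB_t-d\,Y_t\,dt ,\qquad d=1+\tfrac\kappa8 , \]
so $(Y_t)$ is the diffusion on $(-1,1)$ with generator $\mathcal L=\tfrac\kappa8(1-y^2)\partial_y^2-d\,y\,\partial_y$, a time-rescaling of the symmetric Jacobi operator with parameter $\alpha=\tfrac4\kappa-\tfrac12$, which is $>0$ because $\kappa<8$. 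A routine Feller boundary computation (equivalently, comparison of $X$ near $0$ and near $2\pi$ with a Bessel process of dimension $1+\tfrac8\kappa>2$) shows both endpoints are entrance and non-exit; hence $X$ and $Y$ are non-exploding strong Markov processes with smooth, strictly positive transition densities, reversible with respect to the invariant probabilities $\pi_X(x)\propto\sin(x/2)^{8/\kappa}$ and $\rho_Y(y)\propto(1-y^2)^{\alpha}$, which correspond to each other under $y=\cos(x/2)$. In particular $X$ has a transition density, which is the first assertion.

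Next I would invoke the classical spectral theory of the Jacobi operator: $\mathcal L$ is self-adjoint on $L^2((-1,1),\rho_Y)$ with purely discrete spectrum $\{-\mu_n\}_{n\ge0}$, where $\mu_n=\tfrac\kappa8 n\bigl(n+\tfrac8\kappa\bigr)=\tfrac\kappa8 n^2+n$, and with eigenfunctions the normalized Gegenbauer polynomials $\phi_n$ (so $\phi_0\equiv1$, $\mu_0=0$), which obey a polynomial sup-norm bound $\|\phi_n\|_\infty\le C_0\,n^{M}$ with $C_0,M$ depending only on $\kappa$. Hence the transition density of $Y$ has the expansion
\[ p_t^Y(x,y)=\rho_Y(y)\,h_t^Y(x,y),\qquad h_t^Y(x,y)=1+\sum_{n\ge1}e^{-\mu_n t}\phi_n(x)\phi_n(y), \]
which converges absolutely and uniformly in $x,y\in(-1,1)$ for each $t>0$ since $\mu_n\asymp n^2$. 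Moreover the remainder satisfies $\bigl|\sum_{n\ge1}e^{-\mu_n t}\phi_n(x)\phi_n(y)\bigr|\le K\,e^{-\mu_1(t-1)}$ for $t\ge1$, where $K=\sum_{n\ge1}e^{-\mu_n}\|\phi_n\|_\infty^2<\infty$, uniformly in $x,y$. Choosing $t_0\ge1$ so that $K\,e^{-\mu_1(t_0-1)}\le\tfrac12$, we obtain $\tfrac12\rho_Y(y)\le p_t^Y(x,y)\le\tfrac32\rho_Y(y)$ for all $t\ge t_0$ and all $x,y\in(-1,1)$.

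Finally I would transfer back to $X$ via $p_t^X(x,y)=p_t^Y\!\bigl(\cos(x/2),\cos(y/2)\bigr)\cdot\tfrac12\sin(y/2)$ and use $\tfrac12\sin(y/2)\,\rho_Y(\cos(y/2))\asymp\sin(y/2)\cdot(\sin^2(y/2))^{\alpha}=\sin(y/2)^{8/\kappa}$ to conclude that $C_1\sin(y/2)^{8/\kappa}\le p_t^X(x,y)\le C_2\sin(y/2)^{8/\kappa}$ for all $t\ge t_0$ and all $x,y\in(0,2\pi)$, with $C_1,C_2$ depending only on $\kappa$. I expect the main obstacle to be bookkeeping rather than conceptual: one must justify non-explosion and finiteness of the invariant mass across the whole range $\kappa\in(0,8)$ (the Bessel comparison, or a citation to the classical theory of Jacobi diffusions, handles this), and one must control the Gegenbauer expansion uniformly, which rests on the standard estimates $\|P_n^{(\alpha,\alpha)}\|_\infty\asymp n^{\alpha}$ together with the known $L^2((1-y^2)^\alpha\,dy)$-norms. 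An alternative that avoids the explicit polynomials is to derive the spectral gap from the elementary one-dimensional (Muckenhoupt-type) Poincar\'e inequality for the weight $(1-y^2)^\alpha$ and ultracontractivity from the corresponding Nash inequality, then run the same kernel estimate abstractly using reversibility and the semigroup property; but the polynomial route is cleaner and yields explicit constants.
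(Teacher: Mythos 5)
Your proof is correct and takes essentially the same route as the paper: the substitution $Y_t=\cos(X_t/2)$, identification with a symmetric Jacobi (Gegenbauer) diffusion, expansion of the transition kernel in normalized Gegenbauer polynomials, and an absolute-value bound on the tail of the spectral sum uniformly for $t\ge t_0$. The only cosmetic differences are that the paper rescales time so the diffusion coefficient is $1$ before citing its own \cite[Proposition 8.1]{tip} for the kernel formula, and it bounds the tail via exact Gegenbauer norm identities and Stirling's formula rather than the asymptotic $\|\phi_n\|_\infty\lesssim n^M$ estimate you use; your additional remarks on Feller boundary classification and invariant measure are consistent with, though not spelled out in, the paper.
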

\begin{proof}
	Let $Y_t=\cos(\frac 12 X_{\frac 4\kappa \cdot t})$. Then for some standard Brownian motion $\til B_t$, $(Y_t)$ solves the SDE
	\BGE dY_t=-\sqrt{1-Y_t^2}d\til B_t-\Big(\frac 4\kappa+\frac 12\Big) Y_tdt.\label{SDE-Y}\EDE
	This agrees with \cite[Formula (8.2)]{tip} with $\delta=1+\frac 8\kappa$. From \cite[Proposition 8.1]{tip}, $(Y_t)$ has a transition density $p^{(Y)}_t(x,y)$ given by
	$$ p^{(Y)}_t(x,y)=\sum_{n=0}^\infty \frac {(1-y^2)^{\frac 4\kappa -\frac 12} C_n^{(\frac 4\kappa)}(x) C_n^{(\frac 4\kappa)}(y)}{\langle C_n^{(\frac 4\kappa)},C_n^{(\frac 4\kappa)}\rangle_{\frac 4\kappa -\frac 12}}\, e^{-\frac n2(n+\frac 8\kappa)t},$$
	where $C_n^{(\frac 4\kappa)}$, $n=0,1,2,\dots$, are the Gegenbauer polynomials \cite{orthogonal} with  index $\frac 4\kappa$, and $\langle\cdot \rangle_{\frac 4\kappa -\frac 12}$ is the inner product defined by
	$\langle f,g \rangle_{\frac 4\kappa -\frac 12}=\int_{-1}^1 (1-x^2)^{\frac 4\kappa -\frac 12} f(x)g(x) dx$.
	
	Since $Y_t=\cos(X_{\frac 4\kappa \cdot t}/2)$, $(X_t)$ also has a transition density, which is
	$$p_t(x,y)= \sum_{n=0}^\infty \frac {\sin(y/2)^{\frac 8\kappa} C_n^{(\frac 4\kappa)}(\cos(x/2)) C_n^{(\frac 4\kappa)}(\cos(y/2))}{2\langle C_n^{(\frac 4\kappa)},C_n^{(\frac 4\kappa)}\rangle_{\frac 4\kappa -\frac 12}}\, e^{-n(1+\frac \kappa 8n)t}.$$
	
	From \cite{orthogonal} we know that $C_0^{(\frac 4\kappa)}\equiv 1$, and for all $n\ge 0$,
	$$\langle C_n^{(\frac 4\kappa)},C_n^{(\frac 4\kappa)}\rangle_{\frac 4\kappa -\frac 12}=\frac{\pi \Gamma(n+\frac 8\kappa)}{2^{\frac 8\kappa -1}(n+\frac 4\kappa)n!\Gamma(\frac 4\kappa)^2},\quad
	\| C_n^{(\frac 4\kappa)}\|_\infty=\frac{\Gamma(n+\frac 8\kappa)}{n!\Gamma(\frac 8\kappa)}.$$
	Thus, we have
	$$\Big|\frac{\kappa\pi \Gamma(\frac 8\kappa)} { {2^{\frac 8\kappa}} \Gamma(\frac 4\kappa)^2}\cdot\frac {p_t(x,y)}{\sin(y/2)^{\frac 8\kappa}}-1 \Big|\le \sum_{n=1}^\infty  \frac{(\frac \kappa 4 n+1)\Gamma(n+\frac 8\kappa)}{ n!\Gamma(\frac 8\kappa)} \, e^{-n(1+\frac \kappa 8n)t}. $$
	Using Stirling's formula, we can conclude that there exists $t_0\ge 1$ such that for $t\ge t_0$, the RHS of the above formula is less than $1/2$, which implies the conclusion.
\end{proof}

\begin{proof} [Proof of Lemma \ref{plrR2*}.]
	Using Proposition \ref{conformal-content}, Koebe distortion theorem and Proposition \ref{Lem-Koebe} (i) (applied to a conformal map between $\D^*$ and $D\in{\cal D}$ that fixes $\infty$, precomposed and postcomposed  by the reciprocal function $z\mapsto 1/z$), we find that Lemma \ref{plrR2*} follows from the following statement:
\BGE\exists R>r>6,\,\forall p>0,\, \exists l>0,\, \forall z\in\TT\sem\{1\},\quad \nu^\#_{\D^*;z\to \infty;1}(\{\beta:\Cont(\beta([\tau^R_r,\tau_R]))<l \})<p.\label{temp1}\EDE

To see this, first by scaling we may assume that $\ccap(D^c)=0$. Let $g$ be a conformal map from $D$ onto $\D^*$ with $g(\infty)=\infty$. Let $J(z)=1/z$ and $f=J\circ g^{-1}\circ J$. Then $f\in{\cal U}_1$. Suppose (\ref{temp1}) holds with some $R>r>6$. Then by Proposition \ref{Lem-Koebe} (i), there are $0<r_1<r_2<\frac{3/2}r$ such that
$\{r_1<|z|<r_2\}\supset f(\{1/R<|z|<1/r\})$. Let $\til R=1/r_1$ and $\til r=1/r_2$. Then $\til R>\til r>\frac 23 r>4$ and $\{\til r<|z|<\til R\}\supset g^{-1}(\{r<|z|<R\})$. We may then check that Lemma \ref{plrR2*} holds for such $\til R$ and $\til r$. Let $p>0$. Let $l>0$ be given by (\ref{temp1}) for such $p$ (and the $R$ and $r$). Let $D\in\cal D$ have $\ccap(D^c)=0$, and let $a,b$ be distinct prime ends of $D$. We may assume that the above $g$ satisfies $g(b)=1$. Then $g(a)\in\TT\sem \{1\}$. Suppose $\beta$ has the law $\nu^\#_{\D^*;g(a)\to \infty;1}$. Then $\til \beta:=g^{-1}(\beta)$ has the law $\nu^\#_{D;a\to\infty; b}$. From (\ref{temp1}) we know that $\PP[\Cont(\beta([\tau^R_r(\beta),\tau_R(\beta)])<l]<p$. Since $|\log|(g^{-1})'||$ is uniformly bounded on $\{r<|z|<R\}$, by Proposition \ref{conformal-content}, there is $\til l\in(0,l)$ such that $\Cont(\til \beta([\tau^R_r(\beta),\tau_R(\beta)])<\til l$ implies that $\Cont( \beta([\tau^R_r(\beta),\tau_R(\beta)])< l$. From $\{\til r<|z|<\til R\}\supset g^{-1}(\{r<|z|<R\})$ we get $\tau_{\til R}(\til \beta)\ge \tau_R(\beta)$ and $\tau^{\til R}_{\til r}(\til\beta)\le \tau^R_r(\beta)$, which implies that $\PP[\Cont(\til\beta([\tau^{\til R}_{\til r}(\til\beta),\tau_{\til R}(\til \beta)])<\til l]\le \PP[\til \beta([\tau^R_r(\beta),\tau_R(\beta)])<\til l]<p$, as desired.

	Let $z\in\TT\sem\{1\}$. Let $\beta^z$ be a radial SLE$_\kappa(2)$ curve in $\D^*$ from $z$ to $\infty$  with a force point at $1$ (parametrized by capacity). Let $\lambda_t$ and $q_t$ be the driving process and force point process for $\beta^z$, respectively; let $D_t$ be the complement of $\D^*\sem\beta^z([0,t])$ that contains $\infty$; and let $g_t$ be the radial Loewner maps for $\beta^z$. Then $\lambda_t$ and $q_t$ solve (\ref{SDE-lambda-q}) with $x\in(0,2\pi)$ such that $e^{ix}=z$ and $y=0$. Let $X_t=\lambda_t-q_t$, which is a diffusion process that solves (\ref{diffusion}) with initial value $X_0=x$. Let $t_0\ge 1$  be as in Proposition \ref{diffusion-estimate}. Then the law of $X_{t_0}$ has a density w.r.t.\  $\mA|_{(0,2\pi)}$, which is comparable to $\sin(y/2)^{\frac 8\kappa}$.
	
	Let $(\F_t)$ be the filtration generated by $\beta^z$.
	Since $e^{t_0}\ge e>2$, the  statement   (\ref{temp1}) follows from the following:
	\BGE\exists R>r>4e^{t_0},\,\forall p>0,\,\exists l>0,\, \forall z\in\TT\sem\{1\},\quad \PP[ \Cont(\beta^z([\tau^R_r,\tau_R]))<l|\F_{t_0} ]<p.\label{temp1'}\EDE
	
	Let $\ha g_{t_0}(z)=g_{t_0}(z)/e^{iq_{t_0}}$. Then $\ha g_{t_0}$ maps $D_{t_0}$ conformally onto $\D^*$, fixes $\infty$ and $1$, and maps $\beta^z(t_0)$ to $e^{iX_{t_0}}$. From the domain Markov property for $\beta^z$,  the conditional law  given $\F_{t_0}$  of the $\ha g_{t_0}$-image of the  part of $\beta^z$ after $t_0$, denoted by $\ha\beta^z_{t_0}$, is $\nu^\#_{\D^*;X_{t_0}\to \infty;1}$. Using Proposition \ref{conformal-content}, Koebe distortion theorem and Proposition \ref{Lem-Koebe} (i) applied to  $a=e^{-t_0}$ and $f=1/\ha g_{t_0}^{-1}(1/z)$, and using the fact that $\tau^R_r>t_0$ since $\diam(\beta^z([0,t_0]))\le 4e^{t_0}<r<R$, we find that the  statement  (\ref{temp1'}) follows from the following:
	\BGE \exists R>r>6,\,\forall p>0,\,\exists l>0,\, \forall z\in\TT\sem\{1\},\quad \PP[ \Cont(\ha\beta^z_{t_0}([\tau^R_r,\tau_R]))<l|\F_{t_0} ]<p.\label{temp1''}\EDE
Here we used an argument that is similar to that used right after the statement (\ref{temp1}). Note that if (\ref{temp1''}) holds for some $R>r>6$, then we may find $\til R>\til r>4 e^{t_0}$ such that $\{\til r<|z|<\til R\}\supset \ha g_{t_0}^{-1}(\{r<|z|<R\})$. This has to do with that $\ha g_{t_0}(z)/z\to e^{-t_0}$ as $z\to \infty$.
	
Let $\til\beta$ be a radial SLE$_\kappa(2)$ curve  in $\D^*$ from a random point $e^{i\theta}$ to $\infty$ with a  force point at $1$, where $\theta$ is distributed on $(0,2\pi)$ with a density function proportional to $\sin(x/2)^{\frac 8\kappa}$ w.r.t.\ the Lebesgue measure. Since the Minkowski content of any subcurve of $\til\beta$ crossing $\{7\le |z|\le 8\}$ is a.s.\ strictly positive, we find that
\BGE \forall p>0,\quad \exists l>0,\quad\PP[\Cont(\til \beta([\tau^8_7,\tau_8]))<l]<p.\label{temp1'''}\EDE
Since the conditional law of $X_{t_0}$ given $\F_{t_0}$ is absolutely continuous w.r.t.\ the law of ${\theta}$, and the Radon-Nikodym derivative is uniformly bounded away from $\infty$ and $0$, the same is true for the relation between the conditional law of $\ha\beta^z_{t_0}$ given $\F_{t_0}$ and the law of $\til\beta$. So from (\ref{temp1'''})  we find that (\ref{temp1''}) holds with $R=8$ and $r=7$. The proof is now complete.
\end{proof}

\begin{proof} [Proof of Lemma \ref{plrR3*}.]
	Using Proposition \ref{conformal-content}, Koebe distortion theorem and Proposition \ref{Lem-Koebe} (ii) (applied to a conformal map between $\D^*$ and $D\in{\cal D}$ that fixes $\infty$, precomposed and postcomposed by the reciprocal function), we find that the statement of the lemma follows from the following statement:
	\BGE\exists R>6r>6,\,\forall l>0,\,\exists p>0,\,\forall z\in\TT\sem\{1\},\quad \nu^\#_{\D^*;z\to \infty;1}(\{\beta:\Cont(\beta([\tau_r^R,\tau_R]))<l \})>p.\label{temp2}\EDE
Here we used an argument that is similar to the one used after (\ref{temp1}).

	Let $z\in\TT\sem\{1\}$. Let $t_0\ge 1$  be as in Proposition \ref{diffusion-estimate}. Let $\beta^z$, $(X_t)$, $(\F_t)$, $\ha g_{t_0}$ and $\ha\beta^z_{t_0}$ be as in the proof of Lemma \ref{plrR2*}. 	Using Proposition \ref{conformal-content}, Koebe distortion theorem and Proposition \ref{Lem-Koebe} (ii) applied to  $a=e^{-t_0}$ and $f=1/\ha g_{t_0}^{-1}(1/z)$, we find that the  statement with (\ref{temp2}) follows from the following: there exist $R>r>1$ with $R/r>6^2$ such that for any $l>0$ there exists $p>0$ such that for any $z\in\TT\sem\{1\}$,
	\BGE \PP[ \Cont(\ha\beta^z_{t_0}([\tau^R_r,\tau_R]))<l|\F_{t_0} ]>p.\label{temp2'}\EDE
	
Since the conditional law of $\ha\beta^z_{t_0}$ given $\F_{t_0}$ is $\nu^\#_{\D^*;e^{iX_{t_0}}\to \infty;1}$, and the law of $X_{t_0}$ has a density w.r.t.\  $\mA|_{(0,2\pi)}$, which is bounded below on $[\frac 23\pi,\frac 43\pi]$ by a uniform positive constant, the statement with (\ref{temp2'}) follows from the following: there exist $\eps\in(0,1)$ and $R>r>1$ with $R/r\ge 6^2$  such that for any $l>0$ there exists $p>0$ such that for any  $z\in\TT$ with $|z+1|<\eps$,
$$ \nu^\#_{\D^*;z\to \infty;1}(\{\beta:\Cont(\beta([\tau_r^R,\tau_R]))<l \})>p.$$
Using reciprocal maps, we convert this statement into the following:
there exist $\eps<R<r\in(0,1)$ with $R/r< 1/6^2$  such that for any $l>0$ there exists $p>0$ such that for any  $z\in\TT$ with $|z+1|<\eps$,
\BGE \nu^\#_{\D;z\to 0;1}(\{\beta:\Cont(\beta([\tau_r^R,\tau_R]))<l \})>p.\label{temp2''}\EDE

If $z\in\TT$ satisfies $|z+1|<\eps$ for some $\eps<1$, then there exists a M\"obius transformation that maps $(\D;z,0,1)$   onto $(\D;-1,w,1)$ for some $w\in\C$ with $|w|<\eps$. By applying Koebe distortion theorem to the map and using  Proposition \ref{conformal-content}, we see that the statement with (\ref{temp2''}) follows from the following: there exist $0<\eps<R<r<1$ with $\eps/R<1/2$ and $R/r< 1/6^3$ such that for any $l>0$ there exists $p>0$ such that for any  $w\in\C$ with $|w|< \eps$,
\BGE\nu^\#_{\D;-1\to w;1}(\{\beta:\Cont(\beta([\tau_r^R,\tau_R]))<l \})>p.\label{temp2'''}\EDE

We claim that for any $0<\eps<R<1$ with $\eps/R<1/2$ and for any $w\in\C$ with $|w|< \eps$,  $\nu^\#_{\D;-1\to w;1}|_{\F_{\tau_R}}$ is absolutely continuous w.r.t.\ $\nu^\#_{\D;-1\to 0;1}|_{\F_{\tau_R}}$, and the Radon-Nikodym derivative is uniformly bounded above and below by finite positive constants depending only on $\kappa,\eps, R$. To see that this is true, we may compare $\nu^\#_{\D;-1\to w;1}|_{\F_{\tau_R}}$ with the chordal SLE$_\kappa$ measure $\mu^\#_{\D;-1\to 1}$ restricted to the $\sigma$-algebra ${\F_{\tau_R}}$. We use the fact that $\nu^\#_{\D;-1\to w;1}|_{\F_{\tau_R}}$ is absolutely continuous w.r.t.\ $\mu^\#_{D;-1\to 1}|_{\F_{\tau_R}}$, and the Radon-Nikodym derivative equals ${\bf 1}_{\{\tau_R<\infty\}}G_{\D_{\tau_R};\beta(\tau_R),1}(w)/G_{\D;-1,1}(w)$ (cf.\ \cite[Proposition 2.12]{LW}). To prove the claim, we need to show that there is a constant $C\in(1,\infty)$ depending on $\kappa,\eps,R$ such that for any simply connected domain $D$ that contains the disc $\{|z|<R\}$, for any distinct prime ends $a,b$ of $D$, and for any $w\in\C$ with $|w|\le \eps$, we have $1/C\le G_{D;a,b}(w)/G_{D;a,b}(0)\le C$. For this purpose, we use the explicit expression of the chordal SLE$_\kappa$ Green's function  (cf.\ \cite[Formula (8)]{LR}): \BGE G_{D;a,b}(w)= \ha c\Upsilon_{D}(w)^{1-\frac \kappa 8} \sin(\pi*h_D(w;\stackrel{\frown}{ab}))^{\frac 8\kappa -1},\label{Green-expression}\EDE where $\ha c$ is a positive constant depending only on $\kappa$, $ \Upsilon_{D}(w)$ is the conformal radius of $D$ viewed from $w$, and $h_D(w;\stackrel{\frown}{ab})$ is the harmonic measure of the clockwise arc on $\pa D$ from $a$ to $b$ viewed from $w$. Using Koebe distortion theorem and Harnack inequality, we can prove that, if $|w|\le \eps$, then $\Upsilon_D(w)\asymp \Upsilon_D(0)$ and $\sin(\pi*h_D(w;\stackrel{\frown}{ab}))\asymp \sin(\pi*h_D(0;\stackrel{\frown}{ab}))$, and the implicit constants depend only on $\kappa,\eps,R$. Then the claim is proved.

Thus, the statement with (\ref{temp2'''}) follows from the following: there exist $0<R<r<1$ with $R/r\le 1/6^3$  such that for any $l>0$,
$$ \nu^\#_{\D;-1\to 0;1}(\{\beta:\Cont(\beta([\tau^R_r,\tau_R]))<l \})>0.$$ 
It will be more convenient to work on square domains. Let $Q$  be the open square $\{z\in \C:|\Ree z|, |\Imm z|< 1\}$. For $R,r>0$, we define $\ha\tau_R$ to be the hitting time (i.e., the first visiting time) of $\pa (RQ)$, and define $\ha\tau^R_r$ to be the last visiting time of   $\pa (rQ)$ before $\ha\tau_R$. Using Proposition \ref{conformal-content}, Koebe distortion theorem and Proposition \ref{Lem-Koebe} (iii), we find that the previous statement   follows from the following: there exist $R<r\in(0,1)$ with $R/r\le 1/(12*\sqrt 2 *6^3)$  such that
$$ \forall l>0,\quad\nu^\#_{Q;-1\to 0;1}(\{\beta:\Cont(\beta([\ha\tau^R_r,\ha\tau_R]))<l \})>0.$$ 
Since $\nu^\#_{Q;-1\to 0;1}|_{\F_{\ha\tau_R}}$ is absolutely continuous w.r.t.\ the chordal SLE$_\kappa$ measure $\mu^\#_{Q;-1\to 1}|_{\F_{\ha\tau_R}}$, and the Radon-Nikodym derivative is strictly positive on the event $\{\ha\tau_R<\infty\}$, the above statement follows from
\BGE \forall R,l\in(0,1),\quad \mu^\#_{Q;-1\to  1}(\{\beta:\ha\tau_R(\beta)<\infty,\Cont(\beta([0,\ha\tau_R]) )<l \})>0.  \label{for-Remark}\EDE
Define $U_r=\{z\in\C:|\Ree z|< 1,|\Imm z|<r\}$ for $r\in(0,1]$. 
From the generalized restriction property for SLE$_\kappa$, $\kappa\in(0,4]$,  we know that, for any $\delta\in(0,1)$, $ \mu^\#_{U_\delta;-1\to  1} $ is  absolutely continuous w.r.t.\ $ \mu^\#_{Q;-1\to  1}|_{\{\cdot\cap(Q\sem U_\delta) =\emptyset\}}$, and the Radon-Nikodym derivative is expressed in terms of Brownian loop measures and the central charge for SLE$_\kappa$ (\cite{LSW-8/3,loop-soup}). So (\ref{for-Remark}) follows from
\BGE \forall R,l\in(0,1),\exists\delta\in(0,R),\quad \mu^\#_{U_\delta;-1\to  1}(\{\beta: \Cont(\beta([0,\ha\tau_R]) )<l \})>0. \label{temp4}\EDE
Using  Formula (\ref{Green-expression}), Koebe $1/4$ theorem, and the fact that the expectation of the Minkowski content measure of a chordal SLE$_\kappa$ curve equals the integral of its Green's function (cf.\ \cite[Theorem 1.1]{LR}),  we get  $$\int \Cont(\beta) \mu^\#_{U_\delta;-1\to  1}(d\beta)=\int_{U_\delta} G_{U_\delta;-1,1}(z)\mA^2(dz)\lesssim \int_{U_\delta} \dist(z,U_\delta^c)^{\frac{\kappa}{8}-1}\mA^2(dz).$$
Let $Q_1$ be the first quadrant $\{\Ree z,\Imm z>0\}$. For $z\in U_\delta\cap Q_1$, we have $\dist(z,U_\delta^c)=\min\{1-\Ree z,\delta-\Imm z\}$. Since $\frac{\kappa}{8}-1<0$, we have
$$\int_{U_\delta\cap Q_1} \dist(z,U_\delta^c)^{\frac{\kappa}{8}-1}\mA^2(dz)
\le \int_0^1 \int_0^\delta (1-x)^{\frac{\kappa}{8}-1}+(\delta-y)^{\frac{\kappa}{8}-1} dxdy=\frac 8\kappa(\delta+\delta^{\frac \kappa 8}).$$
By symmetry, this inequality also holds with $Q_1$ replaced by other quadrants. So
we have $\int \Cont(\beta) \mu^\#_{U_\delta;-1\to  1}(d\beta)\lesssim  \delta^{\frac \kappa 8}$. We can make $\int \Cont(\beta) \mu^\#_{U_\delta;-1\to  1}(d\beta)<l$ by choosing $\delta\in(0,R)$ small enough. So  we get (\ref{temp4}) and finish the proof.
\end{proof}

\begin{Remark}
One step of the above argument does not work for $\kappa\in(4,8)$, i.e., the generalized restriction property does not hold for $\kappa>4$. One possible way to get around this issue is to work on a chordal SLE$_\kappa$ curve in $U_\delta$ from $-1$ to $1$ conditioned to avoid the two horizontal boundary segments of $U_\delta$. This law of such SLE is absolutely continuous w.r.t.\ $ \mu^\#_{Q;-1\to  1}|_{\{\cdot\cap(Q\sem U_\delta)=\emptyset\}}$. If we can prove that the integral of the Green's function for the conditional SLE$_\kappa$ curve in $U_\delta$ tends to $0$ as $\delta\to 0$, then we can follow the above proof to extend Lemma \ref{plrR3*} and Theorem \ref{not-Holder} to $\kappa\in(4,8)$. This requires some estimate of the Green's functions for this SLE curve, such as $G(z)\lesssim\dist(z,\pa U_\delta)$, which we do not have now.
\end{Remark}

\begin{Remark}
For $\kappa\ge 8$,  we   have a self-similar SLE$_\kappa$ process   of index $1/2$ with stationary increments. This is the SLE$_\kappa$ loop rooted at $\infty$ (now we have a probability measure, cf.\ \cite[Remark 4.8]{loop}). It is space-filling (visits every point in $\C$), and parametrized by the Lebesgue measure.  We can prove that Lemma \ref{integrable*} still holds for $c\in(-2,0)$. To see this, note that (\ref{int-infty*}) holds because
$$\int_0^\infty f_c(\gamma(t))dt\le \int_{-\infty}^{\infty}f_c(\gamma(t))dt=\int_{\C} f_c(z)\mA^2(dz)=\int_{|z|\le 1} |z|^c\mA^2(dz)<\infty.$$
We may decompose an SLE$_\kappa$ loop $\gamma$ rooted at $\infty$ into two arms.
The first arm $\gamma_-:=\gamma|_{\R_-}$ is a whole-plane SLE$_\kappa(\kappa-6)$ curve from $\infty$ to $0$; and given $\gamma_-$, the second arm $\gamma_+:=\gamma|_{\R_+}$ is a chordal SLE$_\kappa$ curve from $0$ to $\infty$ in $\ha\C\sem \gamma_-$. For $\kappa=8$, from the reversibility of whole-plane SLE$_8(2)$ (\cite{MS4}), we know that the time-reversal $\gamma^R_-$ of $\gamma_-$ is a whole-plane SLE$_8(2)$ curve from $0$ to $\infty$. Then Lemmas \ref{plrR2*} and \ref{plrR3*} also hold for $\kappa=8$. In fact, the proof of Lemma \ref{plrR2*} still works in the case $\kappa=8$ without any change. The proof of Lemma \ref{plrR3*}  works from its start all the way to the statement (\ref{for-Remark}), but (\ref{for-Remark}) should be proved in a different way. We have (\ref{for-Remark}) for $\kappa=8$ because for any $\delta<R<1$, there is a positive probability that a chordal SLE$_8$ curve in $Q$ from $-1$ to $1$ will reach the vertical line $\{\Ree z=-R\}$ before any of the horizontal lines $\{\Imm z=\delta\}$ and $\{\Imm z=-\delta\}$, and when this happens, $\Cont(\beta([0,\ha\tau_R]))=\mA(\beta([0,\ha\tau_R]))\le 2\delta(1-R)<l$  if $\delta$ is  small enough.
Thus, Theorems \ref{Theorem-Holder*}, \ref{Theorem-Mckean*} and \ref{not-Holder} all hold in the case $\kappa=8$. The above argument does not work for $\kappa> 8$ due to the lack of time-reversibility of whole-plane SLE$_\kappa(\kappa-6)$.
\label{Remark-last}
\end{Remark}

\begin{Remark}
Mohammad Rezaei proved in \cite{R} that for $\kappa\in(0,8)$, the $d$-dimensional Hausdorff measure of an SLE$_\kappa$ curve is $0$. Using the sssi SLE$_\kappa$ curve, we may gain another perspective on that statement. We now need a new crossing estimate: there exist $R>r>4$ such that for any $L>0$ there exists $p>0$ such that for any $D\in {\cal D}$ with distinct prime ends $a,b$,
\BGE \nu^\#_{D;a\to \infty;b}(\{\beta:\Cont(\beta([\tau^{Re^{\ccap(D^c)}}_{re^{\ccap(D^c)}},\tau_{Re^{\ccap(D^c)}}]))>L*e^{d*\ccap(D^c)}\})>p.\label{new}\EDE
The estimate is similar to \cite[Lemma 3.2]{R}, and its proof should also follow the proof of that lemma, which is technically involved. 
With this estimate, one may prove that  $\gamma([0,1])$ has Hausdorff measure zero using the following approach. For $t\in  [0,1]$ and $n\in\N$,  let $Q_t^n$ denote the dyadic square containing $\gamma(t)$ of side length $2^{-n}$.  Fix $\eps>0$ and $M>m\in\N$. For each $t$, define $Q_t$ such that if there is $n\in[m,M]$ such that $\diam(Q_t^n)^d< \eps \Cont(\gamma\cap Q^n_t)$, then let $Q_t$ be the largest of such $Q_t^n$; otherwise let $Q_t=Q_t^M$.  We then get a finite covering $\{Q_t:t\in[0,1]\}$ of $\gamma([0,1]) $. The sum of the $d$-th powers of the diameters of these squares is naturally decomposed into two parts. The first is over the squares with side length $>2^{-M}$, whose sum is less than $\eps \Cont(\gamma([0,1]))$. The second is over the squares with side length $2^{-M}$. Since $\gamma([0,1])$ has Minkowski content $1$, the typical number of the dyadic squares with side length $2^{-M}$ that intersect $\gamma([0,1])$ is comparable to $2^{dM}$. Given one of such square, the probability that it belongs to the covering $\{Q_t:t\in[0,1]\}$ is small when $M$ is big by (\ref{new}). So we can estimate the expectation of the second sum, and show that it tends to zero as $M\to \infty$.
\end{Remark}


\begin{thebibliography}{00}
	\bibitem{Bf} Vincent Beffara. The dimension of SLE curves. {\it Annals of Probab.}
	{\bf 36}(4):1421-1452, 2008.
\bibitem{FT} Peter Friz and Huy Tran. On the regularity of SLE trace.  arXiv:1611.01107, 2016.
\bibitem{Frostman} Otto Frostman. Potential d\'equilibre et capacit\'e des ensembles avec quelques applications a la th\'eorie des fonctions. {\it Meddel.\ Lunds Univ.\ Math.\ Sem.}, {\bf 3}:1-118, 1935.
\bibitem{inequality} A.M.\ Garsia, E.\ Rodemich and H.\ Rumsey Jr. A real variable lemma and the continuity of paths of some Gaussian processes. {\it Indiana Univ.\ Math.\ J.}, {\bf 20}:565-578, 1970.
\bibitem{GHM2} Ewain Gwynne, Nina Holden and Jason Miller. Dimension transformation formula for conformal maps into the complement of an SLE curve. arXiv:1603.0516, 2016.
\bibitem{GHM1} Ewain Gwynne, Nina Holden and Jason Miller. An almost sure KPZ relation for SLE and Brownian motion. arXiv:1512.01223, 2015.
\bibitem{Optimal} Fredrik Johansson and Gregory Lawler.  Optimal H\"older exponent for the SLE path. {\it Duke Math.\ J.}, {\bf 159}(3):351-383, 2011.
\bibitem{Law1} Gregory  Lawler. {\em Conformally Invariant Processes in the Plane}, Amer.\ Math.\ Soc, 2005.
\bibitem{LR} Gregory Lawler and Mohammad A.\ Rezaei. Minkowski content and natural parametrization for the Schramm-Loewner evolution. {\it Annals of Probab.}, {\bf 43}(3):1082-1120, 2015.
\bibitem{LR2} Gregory Lawler and Mohammad A.\ Rezaei. Up-to-constants bounds on the two-point Green's function for SLE curves.
{\it Electr.\ Comm.\ Prob.}, {\bf 20}, paper no.\ 45, 2015.
\bibitem{LSW1} Gregory  Lawler, Oded Schramm and Wendelin Werner.
Values of Brownian intersection exponents I: Half-plane exponents.
{\it Acta Math.}, {\bf 187}(2):237-273, 2001.
\bibitem{LSW-8/3} Gregory   Lawler, Oded Schramm and Wendelin Werner. Conformal restriction: the chordal case, {\it J.\ Amer.\ Math.\ Soc.}, {\bf 16}(4): 917-955, 2003.
\bibitem{LS} Gregory  Lawler and Scott Sheffield. A natural parametrization for the Schramm-Loewner evolution.  {\it Annals of Probab.}, {\bf 39}(5):1896-1937, 2011.
\bibitem{LERW-NP2} Greg  Lawler and Fredrik Viklund. Convergence of loop-erased random walk in the natural parametrization. arXiv:1603.05203, 2016.
\bibitem{LERW-NP4} Greg  Lawler and Fredrik Viklund. Convergence of radial loop-erased random walk in the natural parametrization. arXiv:1703.03729, 2017.
\bibitem{LW} Greg Lawler and Brent Werness. Multi-point Green's function for SLE and an estimate of Beffara, {\it Annals of Prob.}, {\bf 41}:1513-1555, 2013.
\bibitem{LZ}  Greg Lawler and Wang Zhou. SLE curves and natural parametrization. {\it Ann.\ Probab.}, {\bf 41}(3A):1556-1584, 2013.
\bibitem{loop-soup} Greg  Lawler and Wendelin Werner. The Brownian loop soup. {\it Probab.\ Theory Related Fields},
{\bf 128}(4):565-588, 2004.
 \bibitem{Lind} Joan Lind. H\"older regularity of the SLE trace. {\it T.\ Am.\ Math.\ Soc.}, {\bf 360}(7):3557-3579, 2008.
\bibitem{McK55} Henry P.\ McKean, Jr. Hausdorff-Besicovitch dimension of Brownian motion paths. {\it Duke Math.\ J.}, {\bf 22}:229-234, 1955.
\bibitem{MS4} Jason Miller and Scott Sheffield. Imaginary Geometry IV: interior rays, whole-plane reversibility, and space-filling trees.   arXiv:1302.4738, 2013.
\bibitem{BM} Peter M\"orters and Yuval Peres. {\em Brownian motion}, Cambridge University Press, 2010.
\bibitem{orthogonal} NIST Digital Library of Mathematical Functions. \url{http://dlmf.nist.gov/18}, Release 1.0.6 of 2013-05-06.
\bibitem{R} Mohammad A.\ Rezaei. Hausdorff measure of SLE curves. Available at arXiv:1212.5847. To appear in {\it Stoch.\ Proc.\ Appl.}.
\bibitem{RZ} Mohammad A.\ Rezaei and Dapeng Zhan. Higher moments of the natural parametrization for SLE curves. {\it Ann.\ Inst.\ H.\ Poincar\'e Probab.\ Statist.}, {\bf 53}(1):182-199, 2017.
\bibitem{RS} Steffen Rohde and Oded Schramm. Basic properties of SLE. {\it Ann.\  Math.}, {\bf 161}:879-920, 2005.
\bibitem{Sch} Oded Schramm. Scaling limits of loop-erased random walks and uniform spanning trees, {\it Israel J. Math}. {\bf 118}, 221-288, 2000.
\bibitem{Werness} Brent Werness. Regularity of Schramm-Loewner evolutions, annular crossings, and rough path theory. {\it Electr. J. Prob.}, {\bf 17}, paper no.\ 81, 2012.
\bibitem{loop} Dapeng Zhan. SLE loop measures.   arXiv:1702.08026, 2017.
\bibitem{decomposition} Dapeng Zhan. Decomposition of Schramm-Loewner evolution along its curve. In preprint, arXiv:1509.05015, 2016.
\bibitem{tip} Dapeng Zhan. Ergodicity of the tip of an SLE curve. {\it Prob. Theory Related Fields}, {\bf 164}(1):333-360, 2016.
\end{thebibliography}
\end{document}